\documentclass[12pt]{amsart}

\usepackage[utf8]{inputenc}
\usepackage{txfonts}
\usepackage{amsmath,amssymb,amsthm}
\usepackage{graphicx}
\usepackage{tikz}
\usetikzlibrary{trees,positioning,calc}
\usepackage{array}
\usepackage{caption}
\usepackage[colorlinks=true,
            linkcolor=red,
            citecolor=green,
            urlcolor=blue,
            pagebackref=true,
            final,
            hyperindex]{hyperref}

\renewcommand*{\backrefalt}[4]{%
  \ifcase #1
  \relax
  \else
  \space{\footnotesize #2}%
  \fi
}

\allowdisplaybreaks[4]
\numberwithin{equation}{section}

\newcolumntype{C}[1]{>{\centering\arraybackslash}p{#1}}

\providecommand{\mlabel}[1]{\label{#1}}
\providecommand{\mcite}[1]{\cite{#1}}
\providecommand{\mref}[1]{\ref{#1}}
\providecommand{\meqref}[1]{\eqref{#1}}
\providecommand{\mbibitem}[1]{\bibitem{#1}}

\newif\ifshowlab
\showlabfalse     

\ifshowlab
  \renewcommand{\mlabel}[1]{\label{#1}{\hfill\quad\mbox{\normalfont\bfseries (#1)}}}

  \renewcommand{\mcite}[1]{\cite{#1}}

  \renewcommand{\mref}[1]{\ref{#1}\ifmmode\text{\normalfont\bfseries\ (#1)}\else{\normalfont\bfseries\ (#1)}\fi}
  \renewcommand{\meqref}[1]{\eqref{#1}\ifmmode\text{\normalfont\bfseries\ (#1)}\else{\normalfont\bfseries\ (#1)}\fi}

  \renewcommand{\mbibitem}[1]{\bibitem{#1}}
\else
  \renewcommand{\mlabel}[1]{\label{#1}}
  \renewcommand{\mcite}[1]{\cite{#1}}
  \renewcommand{\mref}[1]{\ref{#1}}
  \renewcommand{\meqref}[1]{\eqref{#1}}
  \renewcommand{\mbibitem}[1]{\bibitem{#1}}
\fi

\newcommand{\bk}{\mathbf{k}}
\newcommand{\Y}{\mathcal{Y}}

\newcommand{\id}{\mathrm{id}}

\newcommand{\DeltaLR}{\Delta_{\rm LR}^{\epsilon}}
\newcommand{\DeltaRT}{\Delta_{\rm RT}^{\epsilon}}

\newcommand{\funit}{\mathbf{1}}

\theoremstyle{plain}
\newtheorem{theorem}{Theorem}[section]
\newtheorem{lemma}[theorem]{Lemma}
\newtheorem{coro}[theorem]{Corollary}
\newtheorem{prop}[theorem]{Proposition}

\theoremstyle{definition}
\newtheorem{definition}[theorem]{Definition}

\newtheorem{exam}[theorem]{Example}
\newtheorem{remark}[theorem]{Remark}

\newcolumntype{M}[1]{>{\centering\arraybackslash}m{#1}}

\newcommand\yy[2]{%
  \begin{scope}[scale=1/2^#1]
    \draw (#2,0)--+(1/2,-1/2)--+(1,0);
  \end{scope}
}

\newcommand\YY[2][]{%
  \tikz[line width=0.18ex,scale=0.75,baseline=-3ex,inner sep=1pt,#1]{%
    \draw (0,0)--+(1/2,-1/2)--+(1,0)
          (1/2,-1/2)--+(0,-1/2);
    #2
  }%
}

\begin{document}

\title[Infinitesimal Bialgebra on Planar Binary Trees]{Infinitesimal Bialgebra on Planar Binary Trees}
\author{Yong Yu}
\address{School of Mathematics and Statistics, Lanzhou University, Lanzhou, 730000, China}
\email{yuyong\_lzu@yeah.net}

\author{Xing Gao$^{*}$}
\address{School of Mathematics and Statistics, Lanzhou University, Lanzhou, 730000, China; Gansu Provincial Research Center for Basic Disciplines of Mathematics and Statistics, Lanzhou, 730070, China}
\email{gaoxing@lzu.edu.cn}

\thanks{$^{*}$ Corresponding author.}

\date{\today}
\begin{abstract}
We construct a weight-zero infinitesimal bialgebra structure on the
$\bk$-module spanned by planar binary trees, using the 
under product $\backslash$ of Aguiar--Sottileand a root-recursive coproduct $\DeltaLR$.
We prove that $\DeltaLR$ is coassociative and satisfies the infinitesimal
derivation rule with respect to $\backslash$, hence gives a unitary
infinitesimal bialgebra distinct from the usual Loday--Ronco Hopf structure.
We also obtain an elementary vertex-cut formula, establish freeness properties for unitary $(\backslash,\vee)$-algebras and
unitary infinitesimal $(\backslash,\vee)$-bialgebras, and identify
the construction with the infinitesimal coproduct transported from planar
rooted forests.
\end{abstract}

\subjclass[2020]{16T10, 16T30, 08B20, 05C05}

\keywords{Planar binary trees; infinitesimal bialgebras; free objects; planar rooted forests}

\maketitle
\vspace{-1.2cm}
\tableofcontents
\vspace{-1.3cm}

\section{Introduction}\mlabel{sec:introduction}

\subsection{Motivation: infinitesimal compatibility on planar binary trees}

Infinitesimal bialgebras were introduced by Joni and Rota as
algebra--coalgebra structures in which the coproduct is a derivation with
respect to the product~\mcite{Joni1979}.  In weight zero, this compatibility is
expressed by
\[
\Delta(ab)=a\cdot \Delta(b)+\Delta(a)\cdot b.
\]
Aguiar further developed infinitesimal Hopf algebras and emphasized that they
are not merely formal variants of ordinary Hopf algebras: they are related to
associative Yang--Baxter equations, pre-Lie structures, dendriform structures,
and combinatorial Hopf algebras~\mcite{Aguiar2000}.  The present paper takes
this infinitesimal point of view in a concrete tree-combinatorial setting.
Instead of asking for a Hopf coproduct on planar binary trees, we ask for a
coproduct which is compatible with the associative operation
$\backslash$ in the infinitesimal sense.

Planar binary trees already carry rich Hopf-theoretic structures.  The
Loday--Ronco Hopf algebra~\mcite{Loday1998} relates them to associahedra,
bracketings, and the Tamari order
\mcite{Stasheff1963a,Stasheff1963b,Tamari1962}.  The Aguiar--Sottile analysis
of this Hopf algebra describes a distinguished associative operation
$\backslash$, progressive trees, and a rooted-forest/binary-tree correspondence
\mcite{Aguiar2006}.  On the other hand, planar rooted forests form a classical
source of combinatorial Hopf and infinitesimal bialgebras, including the
Connes--Kreimer, Grossman--Larson, and related rooted-tree constructions
\mcite{Connes1998,Connes2000,Grossman1989,Kreimer1998,Schmitt1994}.  Weighted
infinitesimal bialgebras on rooted forests and their freeness properties were
constructed in \mcite{zhang2019,Zhang2022a}. The pre-Lie and operadic aspects of rooted trees are part of the background
developed in \mcite{Chapoton2001,Chapoton2004,Chapoton2007}.  Further
Hopf-algebraic developments for rooted trees and decorated rooted trees were
obtained by Foissy and Holtkamp
\mcite{Foissy2001,Foissy2002a,Foissy2002b,Foissy2007,Holtkamp2003,Holtkamp2006}.
Related algebraic structures on or near planar binary trees include
dialgebras, trialgebras, generalized bialgebras, magmatic bialgebras, and Hopf
operads \mcite{Holtkamp2008,Loday2004,Loday2008}.  Planar binary trees also
appear in QED and renormalization-inspired Hopf algebras, as well as in the
Hopf-theoretic study of bi-leveled trees and multiplihedra
\mcite{Brouder2003,Brouder2010,Forcey2009}.  Noncommutative symmetric
functions, free quasi-symmetric functions, and the Malvenuto--Reutenauer Hopf
algebra provide another structural background for the Loday--Ronco setting
\mcite{Duchamp2002,Gelfand1995,Hazewinkel2001,Malvenuto1995,Ronco2002}.
Operated-algebra methods, including free operated algebras, Rota--Baxter type
constructions, rooted-forest cocycles, and cocycle $\vee$-Hopf algebras, give
a parallel framework for the constructions considered here
\mcite{Bokut2010,Cartier1972,Gao2016,Guo2009,Zhang2020,Zhang2022b}.

These two developments suggest a natural problem.   Since forest concatenation has a
weight-zero infinitesimal coproduct, and since the Aguiar--Sottile
correspondence sends forest concatenation to the product $\backslash$ on
planar binary trees, one expects a corresponding infinitesimal coproduct on
planar binary trees.  However, this coproduct should not be confused with the
Loday--Ronco Hopf coproduct, nor with the admissible-cut coproducts appearing
in the Connes--Kreimer or Loday--Ronco Hopf settings.  The purpose of this
paper is to construct such an infinitesimal coproduct directly on planar binary
trees and then to explain its universal and functorial meaning.

\subsection{The guiding problem and the choice of operations}

The operation used throughout the paper is the associative operation
$\backslash$ from the Aguiar--Sottile framework~\mcite{Aguiar2006}, not the original
Loday--Ronco Hopf product in the fundamental tree basis.  For this operation,
progressive planar binary trees are the free associative generators.  The
problem addressed here can therefore be stated as follows:
\begin{center}
\fbox{\begin{minipage}{0.92\textwidth}
\textbf{Guiding problem.}
Construct and characterize a coproduct $\DeltaLR$ on the $\bk$-module spanned
by planar binary trees such that
\[
\DeltaLR(S\backslash T)
=
S\backslash \DeltaLR(T)+\DeltaLR(S)\backslash T,
\]
and such that the construction is compatible with the binary grafting operation
$\vee$ and with the rooted-forest infinitesimal coproduct.
\end{minipage}}
\end{center}

The binary grafting operation $\vee$ is also essential.  It gives the root
decomposition $T=T_l\vee T_r$ of every non-trivial planar binary tree.  The
coproduct constructed in \meqref{eq:yucheng} is root-recursive with respect to
this decomposition, while the infinitesimal bialgebra property is proved with
respect to the product $\backslash$ in Lemma~\ref{lem:derivation} and
Theorem~\ref{thm:inf-bialgebra}.  Thus the paper studies the interaction of two
operations: $\vee$ controls the recursive construction, and $\backslash$
controls the infinitesimal compatibility.

\subsection{Position among nearby tree bialgebras}

The construction in this paper sits close to several familiar Hopf and
infinitesimal frameworks, but it is not identical to any of them.  The table
below records the distinctions that will be used throughout the paper.

\begin{center}
\renewcommand{\tabcolsep}{4pt}
\footnotesize
\begin{tabular}{|M{0.25\textwidth}|M{0.22\textwidth}|M{0.25\textwidth}|M{0.20\textwidth}|}
\hline
\textbf{Framework} & \textbf{Product} & \textbf{Coproduct / cuts} & \textbf{Role in this paper} \\
\hline
\textbf{Planar rooted-forest Hopf algebras}
& Forest product in the Hopf setting
& Hopf coproduct described by admissible cuts
& Hopf-type contrast; not the coproduct studied here \\
\hline
\textbf{Planar rooted-forest infinitesimal bialgebras}
& Forest concatenation
& Recursive infinitesimal coproduct governed by $B^+$
& Source of the transported infinitesimal structure in Theorem~\ref{thm:bridge} \\
\hline
\textbf{Present planar binary-tree infinitesimal bialgebra}
& Under product $\backslash$ together with the grafting operation $\vee$
& Root-recursive coproduct $\DeltaLR$ and elementary vertex cuts
& Main object constructed in \meqref{eq:yucheng} and Theorem~\ref{thm:inf-bialgebra} \\
\hline
\end{tabular}
\captionof{table}{Three nearby tree bialgebra frameworks.}
\mlabel{tab:comparison-frameworks}
\end{center}
Table~\ref{tab:comparison-frameworks} is meant to prevent two possible
misreadings.  First, $\DeltaLR$ is not the Loday--Ronco Hopf coproduct in a
new basis.  It is an infinitesimal coproduct for the under product
$\backslash$.  Second, although Theorem~\ref{thm:bridge} proves that the
rooted-forest infinitesimal coproduct is transported to $\DeltaLR$ by the
map $\Theta$, this bridge is an a posteriori identification.  The coproduct
$\DeltaLR$ is constructed internally on planar binary trees in
\meqref{eq:yucheng}, and its main properties are proved before the bridge with
rooted forests is established.

\subsection{Main results and their significance}

The main contributions of the paper are the following.

\begin{enumerate}
\item \textbf{A root-recursive infinitesimal coproduct.}
We define a linear map
$\DeltaLR:H_{\rm LR}^{\epsilon}\to H_{\rm LR}^{\epsilon}\otimes
H_{\rm LR}^{\epsilon}$ recursively by
\[
\DeltaLR(T_l\vee T_r)
=
T_l\otimes T_r
+(\id\otimes\vee)(\DeltaLR(T_l)\otimes T_r)
+(\vee\otimes\id)(T_l\otimes \DeltaLR(T_r)),
\]
with $\DeltaLR(|)=0$; see \meqref{eq:yucheng}. Proposition~\ref{lem:coassociative} proves
coassociativity, while  Lemma~\ref{lem:derivation}
shows that this coproduct satisfies the weight-zero infinitesimal derivation
rule with respect to $\backslash$.  Consequently,
$(H_{\rm LR}^{\epsilon},\backslash,|,\DeltaLR)$ is a unitary infinitesimal bialgebra of weight
zero by Theorem~\ref{thm:inf-bialgebra}.  This gives a natural infinitesimal
bialgebra structure on planar binary trees which is different from the usual
Hopf structures on the same combinatorial objects.

\item \textbf{An elementary vertex-cut formula.}
For each internal vertex $e$ of a planar binary tree $T$, we define the
elementary vertex cut $e$ and its associated pair
$(P^{e}(T),R^{e}(T))$ in Definition~\ref{def:elementary-vertex-cut}.  The
main formula is
\[
\DeltaLR(T)=\sum_{e\in {\rm Int}(T)} P^{e}(T)\otimes R^{e}(T),
\]
proved in Theorem~\ref{thm:cut-formula}.  Its significance is that the
infinitesimal coproduct is controlled by single-vertex cuts rather than by
Hopf-type admissible cuts.  This gives a direct combinatorial interpretation
of every summand of $\DeltaLR(T)$.

\item \textbf{One-sided grafting identities and comb examples.}
The grafting operation $\vee$ gives two unary operators
$B^{r}(T)=T\vee |$ and $B^{\ell}(T)=|\vee T$.  Proposition~\ref{prop:BellBr-coproduct}
shows that these operators satisfy recursive identities with respect to
$\DeltaLR$.  Corollary~\ref{cor:comb-coproduct} then gives closed formulas for
the coproducts of the left and right comb families.  These formulas provide
explicit test cases for the recursive definition and illustrate how the
infinitesimal structure behaves on basic tree families.

\item \textbf{A universal property for the pair $({\backslash},\vee)$.}
We introduce unitary $(\backslash,\vee)$-algebras and prove that
$(H_{\rm LR}^{\epsilon},\backslash,|,\vee)$ is the free such algebra on the
empty set in Theorem~\ref{thm:free-vee-empty}.  We then introduce unitary
infinitesimal $(\backslash,\vee)$-bialgebras in
Definition~\ref{def:inf-vee-bialgebra}.  Proposition~\ref{prop:HLR-inf-vee}
shows that the planar binary tree object constructed here is an object of this
category, and Theorem~\ref{thm:free-inf-vee} proves that it is free.  This shows that $\Delta_{\rm LR}^{\epsilon}$ is not an auxiliary coproduct
introduced only for the present construction; rather, it is determined by the
unit, the under product $\backslash$, the grafting operation $\vee$, and the
recursive infinitesimal compatibility. 

\item \textbf{A bridge with planar rooted forests.}
The final part of the paper compares the internal construction with the known
infinitesimal bialgebra on planar rooted forests.  Proposition~\ref{prop:uniqbl}
first proves a uniqueness criterion for $\DeltaLR$ in terms of $\DeltaLR(|)=0$,
the $B^{r}$-recursion, and the infinitesimal derivation rule.  Proposition~\ref{prop:theta-alg-iso}
constructs a unitary algebra isomorphism
$\Theta:(\bk\mathcal F,\cdot,\funit)
\longrightarrow
(H_{\rm LR}^{\epsilon},\backslash,|)$ which sends forest
concatenation to $\backslash$ and $B^+$ to $B^{r}$.  Theorem~\ref{thm:bridge}
then proves
\[
(\Theta\otimes\Theta)\DeltaRT(F)=\DeltaLR(\Theta(F)),
\quad \forall \,F\in \bk\mathcal F.
\]
Thus the planar binary tree coproduct is exactly the transported
rooted-forest infinitesimal coproduct, although its construction and proof are
carried out intrinsically on planar binary trees first.
\end{enumerate}

Taken together, these results clarify the infinitesimal counterpart of the
Aguiar--Sottile rooted-forest/binary-tree correspondence.  They also separate
three different notions which are often close in tree combinatorics: Hopf
admissible cuts, rooted-forest infinitesimal cocycles, and the elementary
vertex-cut coproduct on planar binary trees.

\medskip
\noindent{\bf Convention}
Throughout this paper, $\bk$ is a unitary commutative ring.  All modules,
algebras, coalgebras, bialgebras, tensor products, and linear maps are taken
over $\bk$.  By  an algebra we mean an associative algebra, not necessarily unitary.  For any set $X$, $\bk X$ denotes the free
$\bk$-module with basis $X$.  We use Sweedler's notation for the coproduct:
\[
\Delta(X)
=
\sum_{(X)}X_{(1)}\otimes X_{(2)}.
\]

\section{The infinitesimal bialgebra of planar binary trees}\mlabel{sec:bialgebra}

This section constructs the infinitesimal bialgebra structure on the
$\bk$-module spanned by planar binary trees.  We begin by fixing the basic
tree operations used throughout the construction. 

\subsection{Planar binary trees and  the  grafting operation}\label{subsec:planartree}
We first recall the operations $\vee$ and $\backslash$ on planar binary
trees and the corresponding bimodule structure on $H_{\rm LR}^{\epsilon} \otimes H_{\rm LR}^{\epsilon}$.
\begin{definition}\mcite{Aguiar2000,Zhang2022a}
A  {\bf unitary infinitesimal bialgebra (of weight zero)} is a quadruple
$(A,\cdot,1_A,\Delta)$, where $(A,\cdot,1_A)$ is a unitary associative algebra
and
\[
\Delta:A\longrightarrow A\otimes A
\]
is a coassociative coproduct, not necessarily counitary, such that
\begin{equation}
\Delta(ab)
=
a\cdot \Delta(b)
+
\Delta(a)\cdot b,
\quad
\forall\, a,b\in A.
\mlabel{eq:infinitesimal-compatibility}
\end{equation}
\end{definition}

A \textbf{planar binary tree}~\mcite{Loday1998} is a finite rooted planar tree in which every
internal vertex has exactly two ordered incoming branches, called its left and
right branches.  The trivial binary tree, denoted by $|$, has no internal vertices.  Let
$\Y$ be the set of planar binary trees.  For $T\in\Y$, we denote by
${\rm Int}(T)$ the set of internal vertices of $T$; in particular, ${\rm Int}(|)=\emptyset$.  For $n\ge 0$, let
$\Y_n$ denote the set of planar binary trees satisfying
$|{\rm Int}(T)|=n$; thus $\Y_0=\{|\,\}$. For example,
\begin{align*}
\Y_1=
\bigg\{ \YY{} \bigg\},
\quad
\Y_2=
\bigg\{\YY{\yy10}, \, \YY{\yy11}\bigg\},
\quad
\Y_3=
\bigg\{ \YY{\yy10\yy20}, \, \YY{\yy11\yy23}, \,
\YY{\yy20\yy23}, \, \YY{\yy10\yy21}, \, \YY{\yy11\yy22} \bigg\}.
\end{align*}
Set
\[
H_{\rm LR}^{\epsilon}:=\bk\Y=\bigoplus_{n\ge 0}\bk\Y_n.
\]

\begin{definition}\mcite{Loday1998,Zhang2020}
The \textbf{binary grafting operation} is the bi-linear map
\[
\vee:H_{\rm LR}^{\epsilon}\otimes H_{\rm LR}^{\epsilon}
\longrightarrow H_{\rm LR}^{\epsilon},
\quad
T_l\otimes T_r\longmapsto T_l\vee T_r,
\]
where $T_l\vee T_r$ is obtained by creating a new root and attaching $T_l$
and $T_r$ as its left and right branches, respectively.
\mlabel{de:graft}
\end{definition}

For example,
\[
|\vee \YY{}=\YY{\yy11},
\quad
\YY{}\vee \YY{}=\YY{\yy20\yy23}.
\]
Every non-trivial planar binary tree $T\in\Y$, $T\neq |$, admits a unique
root decomposition
$T=T_l\vee T_r,$
where $T_l$ and $T_r$ are called respectively the left and right branches of the root.
For instance,
\[
\YY{\yy10\yy20}=\YY{\yy10}\vee |.
\]
The trees
\[
|,
\quad
\YY{},
\quad
\YY{\yy10},
\quad
\YY{\yy10\yy20},
\quad
\YY{\yy10\yy20\yy30},
\quad \ldots
\]
are called the \textbf{right comb trees}~\mcite{Aguiar2006}.  Denote by
$F_n^{r}$ the right comb tree with $n$ internal vertices:
\begin{equation}
F_0^{r}:=|,
\quad
F_n^{r}:=F_{n-1}^{r}\vee |,
\quad \forall \, n\ge 1.
\mlabel{eq:right-comb}
\end{equation}
Similarly, the left comb trees are given by
\begin{equation}
F_0^{\ell}:=|,
\quad
F_n^{\ell}:=|\vee F_{n-1}^{\ell},
\quad \forall \, n\ge 1.
\mlabel{eq:left-comb}
\end{equation}

\subsection{Recursive definition of the infinitesimal coproduct}
\mlabel{subsec:recursive}

We now define a coproduct on $H_{\rm LR}^{\epsilon}$ by recursion on the root
decomposition $T=T_l\vee T_r$.  This construction only uses the grafting
operation $\vee$ at this stage.  Its coassociativity will be proved first; the
under product $\backslash$ will then be introduced in order to formulate and
prove the infinitesimal compatibility relation.

Define the {\bf infinitesimal coproduct} to be the linear map
\[
\Delta_{\rm LR}^{\epsilon}:H_{\rm LR}^{\epsilon}
\longrightarrow
H_{\rm LR}^{\epsilon}\otimes H_{\rm LR}^{\epsilon}
\]
by setting
\begin{equation}
\Delta_{\rm LR}^{\epsilon}(T):=
\left\{
\begin{array}{c@{\quad}l}
0,
& \text{if} \,T=|,
\\[1mm]
\begin{aligned}
&T_l\otimes T_r
+(\id\otimes \vee)\bigl(\Delta_{\rm LR}^{\epsilon}(T_l)\otimes T_r\bigr)
\\
&\quad
+(\vee\otimes \id)\bigl(T_l\otimes \Delta_{\rm LR}^{\epsilon}(T_r)\bigr)
\end{aligned}
& \text{if} \, T=T_l\vee T_r\neq |.
\end{array}
\right.
\mlabel{eq:yucheng}
\end{equation}
Equivalently, for $T=T_l\vee T_r\neq |$,
\begin{equation}
\Delta_{\rm LR}^{\epsilon}(T)
:=
T_l\otimes T_r
+
\sum_{(T_l)}
T_{l,(1)}\otimes \bigl(T_{l,(2)}\vee T_r\bigr)
+
\sum_{(T_r)}
\bigl(T_l\vee T_{r,(1)}\bigr)\otimes T_{r,(2)}.
\mlabel{eq:Delta-expanded}
\end{equation}

\begin{exam}
For some low-degree planar binary trees, we have
\begin{align*}
\Delta_{\rm LR}^{\epsilon}(\YY{\yy10\yy20})
&=
\YY{}\otimes \YY{}
+|\otimes \YY{\yy10}
+\YY{\yy10}\otimes |
,
\\[1mm]
\Delta_{\rm LR}^{\epsilon}(\YY{\yy20\yy23})
&=
\YY{}\otimes \YY{}
+|\otimes \YY{\yy11}
+\YY{\yy10}\otimes |.
\end{align*}
For comparison, let $\Delta$ denote the Loday--Ronco
Hopf coproduct on undecorated planar binary trees~\mcite{Loday1998}.  For the same two trees,
one has
\begin{align*}
\Delta(\YY{\yy10\yy20})
&=
\YY{\yy10}\otimes \YY{}
+
\YY{}\otimes \YY{\yy10}
+
|\otimes \YY{\yy10\yy20}
+\YY{\yy10\yy20}\otimes |,
\\[1mm]
\Delta(\YY{\yy20\yy23})
&=
\YY{\yy10}\otimes \YY{}
+
\YY{\yy11}\otimes \YY{}
+
\YY{}\otimes \YY{\yy10}
+
\YY{}\otimes \YY{\yy11}
+\YY{\yy20\yy23}\otimes |
+
|\otimes \YY{\yy20\yy23}.
\end{align*}
\mlabel{exam:coproduct-basic}
\end{exam}

\begin{prop}
 $\Delta_{\rm LR}^{\epsilon}$  is coassociative:
\[
(\Delta_{\rm LR}^{\epsilon}\otimes \id)
\Delta_{\rm LR}^{\epsilon}
=
(\id\otimes \Delta_{\rm LR}^{\epsilon})
\Delta_{\rm LR}^{\epsilon}.
\]
\mlabel{lem:coassociative}
\end{prop}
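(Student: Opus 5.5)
We prove coassociativity by induction on $|{\rm Int}(T)|$, checking it on basis trees $T\in\Y$. For $T=|$ both sides vanish since $\DeltaLR(|)=0$. For $T=T_l\vee T_r$ we assume the identity for $T_l$ and $T_r$, expand both sides with \meqref{eq:Delta-expanded}, and match the terms.

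Write $\Delta=\DeltaLR$. First compute the left side $(\Delta\otimes\id)\Delta(T)$. Apply $\Delta\otimes\id$ to the three groups in \meqref{eq:Delta-expanded}.
\begin{enumerate}
\item The first group gives $\Delta(T_l)\otimes T_r$.
\item The second group gives $\sum (\Delta(T_{l,(1)}))\otimes (T_{l,(2)}\vee T_r)$, i.e.\ $(\id\otimes\id\otimes\vee)\bigl((\Delta\otimes\id)\Delta(T_l)\otimes T_r\bigr)$.
\item The third group needs $\Delta(T_l\vee T_{r,(1)})$, which is expanded once more by \meqref{eq:Delta-expanded}. This produces three subgroups: $T_l\otimes T_{r,(1)}\otimes T_{r,(2)}$; $\sum T_{l,(1)}\otimes(T_{l,(2)}\vee T_{r,(1)})\otimes T_{r,(2)}$; and $\sum (T_l\vee T_{r,(1)(1)})\otimes T_{r,(1)(2)}\otimes T_{r,(2)}$.
\end{enumerate}

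Next compute the right side $(\id\otimes\Delta)\Delta(T)$ in the same way.
\begin{enumerate}
\item The first group gives $T_l\otimes\Delta(T_r)$.
\item The third group gives $(\vee\otimes\id\otimes\id)\bigl(T_l\otimes(\id\otimes\Delta)\Delta(T_r)\bigr)$.
\item The second group needs $\Delta(T_{l,(2)}\vee T_r)$, which expands into three subgroups: $T_{l,(1)}\otimes T_{l,(2)}\otimes T_r$; $\sum T_{l,(1)}\otimes T_{l,(2)(1)}\otimes(T_{l,(2)(2)}\vee T_r)$; and $\sum T_{l,(1)}\otimes (T_{l,(2)}\vee T_{r,(1)})\otimes T_{r,(2)}$.
\end{enumerate}

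Now match the terms.
\begin{itemize}
\item $\Delta(T_l)\otimes T_r$ on the left equals the subgroup $T_{l,(1)}\otimes T_{l,(2)}\otimes T_r$ on the right.
\item $T_l\otimes\Delta(T_r)$ on the right equals the subgroup $T_l\otimes T_{r,(1)}\otimes T_{r,(2)}$ on the left.
\item The mixed terms $\sum T_{l,(1)}\otimes(T_{l,(2)}\vee T_{r,(1)})\otimes T_{r,(2)}$ appear identically on both sides.
\item The term $(\id\otimes\id\otimes\vee)((\Delta\otimes\id)\Delta(T_l)\otimes T_r)$ on the left matches $\sum T_{l,(1)}\otimes T_{l,(2)(1)}\otimes(T_{l,(2)(2)}\vee T_r)$ on the right, by the induction hypothesis for $T_l$.
\item The remaining left term with $T_{r,(1)(1)},T_{r,(1)(2)},T_{r,(2)}$ matches $(\vee\otimes\id\otimes\id)(T_l\otimes(\id\otimes\Delta)\Delta(T_r))$ on the right, by the induction hypothesis for $T_r$.
\end{itemize}
Thus all nine groups pair off and the identity holds for $T$.

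The main obstacle is the bookkeeping: applying the recursive definition a second time inside a grafted tensor factor, and recognising where linearity of $\vee$ lets the induction hypothesis be applied. To keep this clean we will write everything with the operators $(\id\otimes\vee)$ and $(\vee\otimes\id)$ rather than Sweedler indices. The key observation is that $\Delta(X\vee Y)$ applied with $X$ or $Y$ a Sweedler component is again given by \meqref{eq:yucheng} by linearity. No property of $\backslash$ is needed.
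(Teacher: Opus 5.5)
Your proposal is correct and follows essentially the same argument as the paper. Both prove the identity by induction on $|{\rm Int}(T)|$, expand each side via \eqref{eq:Delta-expanded}, and apply the recursion a second time inside $\DeltaLR(T_l\vee T_{r,(1)})$ and $\DeltaLR(T_{l,(2)}\vee T_r)$; three pairs then match directly and the remaining two match by the induction hypotheses for $T_l$ and $T_r$. The only blemish is the final count: the terms form five pairs, ten terms in all, not nine groups, which does not affect the argument.
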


\begin{proof}
Let $T \in \Y$. We prove the identity by induction on $|{\rm Int}(T)|\geq 0$. For the initial step of $|{\rm Int}(T)|=0$, we have $T=|$.  Hence
\[
(\Delta_{\rm LR}^{\epsilon}\otimes \id)
\Delta_{\rm LR}^{\epsilon}(|)
=
0
=
(\id\otimes \Delta_{\rm LR}^{\epsilon})
\Delta_{\rm LR}^{\epsilon}(|).
\]
For the induction step of $|{\rm Int}(T)|\ge 1$, 
\begin{align}
(\Delta_{\rm LR}^{\epsilon}\otimes \id)
\Delta_{\rm LR}^{\epsilon}(T)
&\overset{\eqref{eq:Delta-expanded}}=
\sum_{(T_l)}
T_{l,(1)}\otimes T_{l,(2)}\otimes T_r
+
\sum_{(T_l)}
\Delta_{\rm LR}^{\epsilon}(T_{l,(1)})
\otimes \bigl(T_{l,(2)}\vee T_r\bigr)
\notag
\\
&\quad
+
\sum_{(T_r)}
\Delta_{\rm LR}^{\epsilon}\bigl(T_l\vee T_{r,(1)}\bigr)
\otimes T_{r,(2)}
\notag
\\
&\overset{\eqref{eq:Delta-expanded}}=
\sum_{(T_l)}
T_{l,(1)}\otimes T_{l,(2)}\otimes T_r
+
\sum_{(T_l)}
\Delta_{\rm LR}^{\epsilon}(T_{l,(1)})
\otimes \bigl(T_{l,(2)}\vee T_r\bigr)
\notag
\\
&\quad
+
\sum_{(T_r)}
T_l\otimes T_{r,(1)}\otimes T_{r,(2)}
\notag
\\
&\quad
+
\sum_{(T_l),(T_r)}
T_{l,(1)}\otimes
\bigl(T_{l,(2)}\vee T_{r,(1)}\bigr)
\otimes T_{r,(2)}
\notag
\\
&\quad
+
\sum_{(T_r),(T_{r,(1)})}
\bigl(T_l\vee T_{r,(1)(1)}\bigr)
\otimes T_{r,(1)(2)}
\otimes T_{r,(2)}.
\mlabel{eq:coass-new-2}
\end{align}
Similarly, 
\begin{align} 
(\id\otimes \Delta_{\rm LR}^{\epsilon}) \Delta_{\rm LR}^{\epsilon}(T) &\overset{\eqref{eq:Delta-expanded}}
= \sum_{(T_r)} T_l\otimes T_{r,(1)}\otimes T_{r,(2)} + \sum_{(T_l)} T_{l,(1)}\otimes \Delta_{\rm LR}^{\epsilon} \bigl(T_{l,(2)}\vee T_r\bigr) 
\notag \\ 
&\quad + \sum_{(T_r)} \bigl(T_l\vee T_{r,(1)}\bigr) \otimes \Delta_{\rm LR}^{\epsilon}(T_{r,(2)}) 
\notag \\ &\overset{\eqref{eq:Delta-expanded}}= \sum_{(T_r)} T_l\otimes T_{r,(1)}\otimes T_{r,(2)} + \sum_{(T_l)} T_{l,(1)}\otimes T_{l,(2)}\otimes T_r 
\notag \\ &\quad + \sum_{(T_l),(T_{l,(2)})} T_{l,(1)} \otimes T_{l,(2)(1)} \otimes \bigl(T_{l,(2)(2)}\vee T_r\bigr) \notag \\ &\quad + \sum_{(T_l),(T_r)} T_{l,(1)}\otimes \bigl(T_{l,(2)}\vee T_{r,(1)}\bigr) \otimes T_{r,(2)} \notag \\ &\quad + \sum_{(T_r)} \bigl(T_l\vee T_{r,(1)}\bigr) \otimes \Delta_{\rm LR}^{\epsilon}(T_{r,(2)}). 
\mlabel{eq:coass-new-3} 
\end{align}
We now compare   \meqref{eq:coass-new-2} and
\meqref{eq:coass-new-3}. Write
$T=T_l\vee T_r$, applying the induction hypothesis to
$T_l$ gives
\[
(\Delta_{\rm LR}^{\epsilon}\otimes\id)
\Delta_{\rm LR}^{\epsilon}(T_l)
=
(\id\otimes\Delta_{\rm LR}^{\epsilon})
\Delta_{\rm LR}^{\epsilon}(T_l),
\]
that is,
\[
\sum_{(T_l),(T_{l,(1)})}
T_{l,(1)(1)}\otimes T_{l,(1)(2)}\otimes T_{l,(2)}
=
\sum_{(T_l),(T_{l,(2)})}
T_{l,(1)}\otimes T_{l,(2)(1)}\otimes T_{l,(2)(2)}.
\]
Hence, 
\begin{equation}
\sum_{(T_l)}
\Delta_{\rm LR}^{\epsilon}(T_{l,(1)})
\otimes \bigl(T_{l,(2)}\vee T_r\bigr)
=
\sum_{(T_l),(T_{l,(2)})}
T_{l,(1)}
\otimes T_{l,(2)(1)}
\otimes \bigl(T_{l,(2)(2)}\vee T_r\bigr).
\mlabel{eq:coass-new-4}
\end{equation}
Similarly, 
\begin{equation}
\sum_{(T_r),(T_{r,(1)})}
\bigl(T_l\vee T_{r,(1)(1)}\bigr)
\otimes T_{r,(1)(2)}
\otimes T_{r,(2)}
=
\sum_{(T_r)}
\bigl(T_l\vee T_{r,(1)}\bigr)
\otimes
\Delta_{\rm LR}^{\epsilon}(T_{r,(2)}).
\mlabel{eq:coass-new-5}
\end{equation}
Comparing \meqref{eq:coass-new-2} with \meqref{eq:coass-new-3}, the common
terms are identical, while the remaining terms are identified by
\meqref{eq:coass-new-4} and \meqref{eq:coass-new-5}.  
This proves the induction step and completes the proof.
\end{proof}

We now introduce the under product.  This product will be used to express the
infinitesimal compatibility of $\Delta_{\rm LR}^{\epsilon}$.

\begin{definition}\mcite{Aguiar2006}
The \textbf{under product}\footnotemark{} is the bi-linear map
\[
\backslash:H_{\rm LR}^{\epsilon} \otimes H_{\rm LR}^{\epsilon} \longrightarrow H_{\rm LR}^{\epsilon}, \quad
S\otimes T\longmapsto S\backslash T,
\]
where $S\backslash T$ is obtained by adjoining the root of $T$ to the
rightmost branch of $S$. 

The trivial tree $|$ is the unit, satisfying
$|\backslash T=T=T\backslash |.$
Moreover, if $S\neq |$ and
$S=S_l\vee S_r$
is its root decomposition, then, for every $T\in\Y$, one has
\begin{equation}
S\backslash T=S_l\vee(S_r\backslash T).
\mlabel{eq:backslash-recursion}
\end{equation}
\end{definition}

\footnotetext{We use the term ``under product'' for $\backslash$, following
Aguiar and Sottile~\mcite{Aguiar2006}, rather than the more generic phrase
``associative product''.  This avoids confusing $\backslash$ with other
associative products on planar binary trees, in particular the Loday--Ronco
Hopf product.}

\begin{exam}
Let
$
S=\YY{\yy10}=\YY{}\vee |$ and $T=\YY{}.$ Then,
\[
S\backslash T
=
S_l\vee(S_r\backslash T)
=
\YY{}\vee(|\backslash \YY{})
=
\YY{}\vee \YY{}
=
\YY{\yy20\yy23}.
\]
\end{exam}

A \textbf{right decomposition}~\mcite{Aguiar2006} of a planar binary tree $T$
is a way of writing $T$ as $R\backslash S$.  
The trivial right decompositions
are
$$
T=T\backslash |, \quad
T=|\backslash T.
$$
A non-trivial tree $T$ is called \textbf{progressive}~\mcite{Aguiar2006} if
the trivial right decompositions are its only right decompositions.
Equivalently, if $T=T_l\vee T_r$ is the root decomposition of $T$, then
$T$ is progressive if and only if $T_r=|$.  Thus every non-trivial progressive
tree is of the form $T_l\vee |$.  For instance,
\[
\YY{},
\quad
\YY{\yy10},
\quad
\YY{\yy10\yy21},
\quad
\YY{\yy10\yy21\yy30}.
\]
Moreover, every non-trivial planar binary tree admits a unique factorization
\[
T=U_1\backslash\cdots\backslash U_m,
\quad
U_i\in{\rm P}(\Y),
\quad m\ge 1,
\]
where ${\rm P}(\Y)$ denotes the set of  progressive planar binary
trees.  In particular, Aguiar and Sottile~\mcite{Aguiar2006} showed that
$(H_{\rm LR}^{\epsilon},\backslash,|)$ is the free unitary associative algebra
on ${\rm P}(\Y)$.

We equip $H_{\rm LR}^{\epsilon}\otimes H_{\rm LR}^{\epsilon}$ with the natural
$H_{\rm LR}^{\epsilon}$-bimodule structure induced by the under product
$\backslash$:
\begin{equation} 
\begin{alignedat}{3}
 \lambda:H_{\rm LR}^{\epsilon} \otimes(H_{\rm LR}^{\epsilon}\otimes H_{\rm LR}^{\epsilon}) &\longrightarrow H_{\rm LR}^{\epsilon}\otimes H_{\rm LR}^{\epsilon}, &\quad U\otimes(V\otimes W) &\longmapsto (U\backslash V)\otimes W,
  \\ \rho:(H_{\rm LR}^{\epsilon} \otimes H_{\rm LR}^{\epsilon})\otimes H_{\rm LR}^{\epsilon} &\longrightarrow H_{\rm LR}^{\epsilon} \otimes H_{\rm LR}^{\epsilon}, &\quad (U\otimes V)\otimes W &\longmapsto U\otimes(V\backslash W). 
  \end{alignedat} 
 \mlabel{de:bimo} \end{equation}

\begin{lemma}
For all $S,T\in H_{\rm LR}^{\epsilon}$, one has
\begin{equation}
\Delta_{\rm LR}^{\epsilon}(S\backslash T)
=
S\backslash \Delta_{\rm LR}^{\epsilon}(T)
+
\Delta_{\rm LR}^{\epsilon}(S)\backslash T.
\mlabel{eq:derivation}
\end{equation}
\mlabel{lem:derivation}
\end{lemma}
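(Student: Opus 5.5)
By bilinearity of both sides it suffices to prove \eqref{eq:derivation} for basis trees $S,T\in\Y$. We argue by induction on $|{\rm Int}(S)|$, keeping $T$ arbitrary. Throughout, $S\backslash(U\otimes V)=(S\backslash U)\otimes V$ and $(U\otimes V)\backslash T=U\otimes(V\backslash T)$, as in \eqref{de:bimo}.

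The base case is $S=|$. Then $S\backslash T=T$ and $\Delta_{\rm LR}^{\epsilon}(|)=0$, so both sides equal $\Delta_{\rm LR}^{\epsilon}(T)$, because $|$ acts as the unit in the left factor. The degenerate case $T=|$ is similarly immediate, though it also falls out of the general argument.

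For the induction step, write $S=S_l\vee S_r$. By \eqref{eq:backslash-recursion}, $S\backslash T=S_l\vee(S_r\backslash T)$. Expanding with \eqref{eq:Delta-expanded} gives
\[
\Delta_{\rm LR}^{\epsilon}(S\backslash T)=S_l\otimes(S_r\backslash T)+\sum_{(S_l)}S_{l,(1)}\otimes\bigl(S_{l,(2)}\vee(S_r\backslash T)\bigr)+(\vee\otimes\id)\bigl(S_l\otimes\Delta_{\rm LR}^{\epsilon}(S_r\backslash T)\bigr).
\]
Since $S_r$ has fewer internal vertices than $S$, the induction hypothesis splits the last term as
\[
\Delta_{\rm LR}^{\epsilon}(S_r\backslash T)=S_r\backslash\Delta_{\rm LR}^{\epsilon}(T)+\Delta_{\rm LR}^{\epsilon}(S_r)\backslash T .
\]
This produces two pieces. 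The first is $\sum_{(T)}\bigl(S_l\vee(S_r\backslash T_{(1)})\bigr)\otimes T_{(2)}$, which by \eqref{eq:backslash-recursion} equals $\sum_{(T)}(S\backslash T_{(1)})\otimes T_{(2)}=S\backslash\Delta_{\rm LR}^{\epsilon}(T)$. The second is $\sum_{(S_r)}(S_l\vee S_{r,(1)})\otimes(S_{r,(2)}\backslash T)$.

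Next, expand the target term using \eqref{eq:Delta-expanded}:
\[
\Delta_{\rm LR}^{\epsilon}(S)\backslash T=S_l\otimes(S_r\backslash T)+\sum_{(S_l)}S_{l,(1)}\otimes\bigl((S_{l,(2)}\vee S_r)\backslash T\bigr)+\sum_{(S_r)}(S_l\vee S_{r,(1)})\otimes(S_{r,(2)}\backslash T).
\]
The first and third summands match the corresponding terms obtained above. For the middle summand, \eqref{eq:backslash-recursion} gives $(S_{l,(2)}\vee S_r)\backslash T=S_{l,(2)}\vee(S_r\backslash T)$, which matches the remaining term. One subtle point is that $S_{l,(2)}$ may equal $|$. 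This is harmless, because \eqref{eq:backslash-recursion} concerns the tree $S_{l,(2)}\vee S_r$, which is always non-trivial.

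So the whole argument is a direct term-by-term matching. The only step needing care is the bookkeeping: the induction is applied to $S_r$ alone, with no induction on $T$, and the unit cases for $|$ in the bimodule actions must be handled correctly. I expect no real obstacle beyond this.
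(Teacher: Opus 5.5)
Your proposal is correct and follows essentially the same route as the paper's proof. Both argue by induction on $|{\rm Int}(S)|$, rewrite $S\backslash T=S_l\vee(S_r\backslash T)$ via \eqref{eq:backslash-recursion}, apply the inductive hypothesis to $\Delta_{\rm LR}^{\epsilon}(S_r\backslash T)$, and match the resulting terms one by one with $S\backslash\Delta_{\rm LR}^{\epsilon}(T)$ and $\Delta_{\rm LR}^{\epsilon}(S)\backslash T$.
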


\begin{proof}
It is enough to prove the identity for
$S,T\in\Y$.  We prove it by induction on $|{\rm Int}(S)|\geq 0$.
For the initial step of $|{\rm Int}(S)|=0$, we have $S=|$.  Hence
\[
\Delta_{\rm LR}^{\epsilon}(|\backslash T)
=
\Delta_{\rm LR}^{\epsilon}(T)
=
|\backslash \Delta_{\rm LR}^{\epsilon}(T)
+
\Delta_{\rm LR}^{\epsilon}(|)\backslash T.
\]
For the induction step of $|{\rm Int}(S)|\ge 1$, on the one hand,
\begin{align}
\Delta_{\rm LR}^{\epsilon}(S\backslash T)
&\overset{\eqref{eq:backslash-recursion}}=
\Delta_{\rm LR}^{\epsilon}\bigl(S_l\vee(S_r\backslash T)\bigr)
\notag
\\
&\overset{\eqref{eq:yucheng}}=
S_l\otimes(S_r\backslash T)
+
\sum_{(S_l)}
S_{l,(1)}\otimes
\bigl(S_{l,(2)}\vee(S_r\backslash T)\bigr)
\notag
\\
&\quad
+
(\vee\otimes\id)
\bigl(S_l\otimes
\Delta_{\rm LR}^{\epsilon}(S_r\backslash T)\bigr)
\notag
\\
&=
S_l\otimes(S_r\backslash T)
+
\sum_{(S_l)}
S_{l,(1)}\otimes
\bigl(S_{l,(2)}\vee(S_r\backslash T)\bigr)
\notag
\\
&\quad
+
(\vee\otimes\id)
\bigl(S_l\otimes
(S_r\backslash\Delta_{\rm LR}^{\epsilon}(T)
+
\Delta_{\rm LR}^{\epsilon}(S_r)\backslash T)\bigr)
\hspace{1cm} \text{(by the inductive hypothesis)}
\notag
\\
&=
S_l\otimes(S_r\backslash T)
+
\sum_{(S_l)}
S_{l,(1)}\otimes
\bigl(S_{l,(2)}\vee(S_r\backslash T)\bigr)
\notag
\\
&\quad
+
(\vee\otimes\id)
\bigl(S_l\otimes
(S_r\backslash\Delta_{\rm LR}^{\epsilon}(T))\bigr)
+
(\vee\otimes\id)
\bigl(S_l\otimes
(\Delta_{\rm LR}^{\epsilon}(S_r)\backslash T)\bigr).
\mlabel{eq:zhankai}
\end{align}
On the other hand, 
\begin{align}
S\backslash\Delta_{\rm LR}^{\epsilon}(T)
&=
\sum_{(T)}
(S\backslash T_{(1)})\otimes T_{(2)}
\notag
\\
&\overset{\eqref{eq:backslash-recursion}}=
\sum_{(T)}
\bigl(S_l\vee(S_r\backslash T_{(1)})\bigr)\otimes T_{(2)}
\notag
\\
&=
(\vee\otimes\id)
\bigl(S_l\otimes
(S_r\backslash\Delta_{\rm LR}^{\epsilon}(T))\bigr),
\mlabel{eq:dao}
\end{align}
and
\begin{align}
\Delta_{\rm LR}^{\epsilon}(S)\backslash T
&\overset{\eqref{eq:yucheng}}=
\begin{aligned}[t]
\Bigl(
&S_l\otimes S_r
+
\sum_{(S_l)}
S_{l,(1)}\otimes (S_{l,(2)}\vee S_r)
\\
&+
\sum_{(S_r)}
(S_l\vee S_{r,(1)})\otimes S_{r,(2)}
\Bigr)\backslash T
\end{aligned}
\notag
\\
&\overset{\eqref{de:bimo}}=
\begin{aligned}[t]
&S_l\otimes(S_r\backslash T)
+
\sum_{(S_l)}
S_{l,(1)}\otimes
\bigl((S_{l,(2)}\vee S_r)\backslash T\bigr)
\\
&+
\sum_{(S_r)}
(S_l\vee S_{r,(1)})\otimes(S_{r,(2)}\backslash T)
\end{aligned}
\notag
\\
&\overset{\eqref{eq:backslash-recursion}}=
\begin{aligned}[t]
&S_l\otimes(S_r\backslash T)
+
\sum_{(S_l)}
S_{l,(1)}\otimes
\bigl(S_{l,(2)}\vee(S_r\backslash T)\bigr)
\\
&+
\sum_{(S_r)}
(S_l\vee S_{r,(1)})\otimes(S_{r,(2)}\backslash T)
\end{aligned}
\notag
\\
&=
\begin{aligned}[t]
&S_l\otimes(S_r\backslash T)
+
\sum_{(S_l)}
S_{l,(1)}\otimes
\bigl(S_{l,(2)}\vee(S_r\backslash T)\bigr)
\\
&+
(\vee\otimes\id)
\bigl(S_l\otimes
(\Delta_{\rm LR}^{\epsilon}(S_r)\backslash T)\bigr).
\end{aligned}
\mlabel{eq:deltst}
\end{align}
Substituting \meqref{eq:dao} and \meqref{eq:deltst} into
\meqref{eq:zhankai}, we complete the proof.
\end{proof}

\subsection{An elementary vertex-cut formula for the infinitesimal coproduct}
\mlabel{subsec:combinatorial}

We now give a combinatorial interpretation of the coproduct
$\Delta_{\rm LR}^{\epsilon}$.  

\begin{definition}
Let $T\in\Y$ and $e\in{\rm Int}(T)$.  The {\bf elementary cut} of $T$ at $e$
is specified by a pair
\[
\bigl(P^e(T),R^e(T)\bigr)\in\Y\times\Y,
\]
defined recursively on $|{\rm Int}(T)|$ according to the position of $e$ in
the root decomposition of $T$.

There is no elementary cut on the trivial tree $|$.  The initial case is
$|{\rm Int}(T)|=1$.  Then $T=|\vee |$ and
${\rm Int}(T)=\{\rho_T\}$.  We set
\[
\bigl(P^{\rho_T}(T),R^{\rho_T}(T)\bigr):=(|,|).
\]
Now assume that $|{\rm Int}(T)|\ge 2$ and write
$T=T_l\vee T_r$ with
${\rm Int}(T)=\{\rho_T\}\sqcup{\rm Int}(T_l)\sqcup{\rm Int}(T_r).$
We define
\begin{equation}
\bigl(P^e(T),R^e(T)\bigr):=
\left\{
\begin{aligned}
&(T_l,T_r),
&&  \quad \text{if} \, e=\rho_T,\\
&\bigl(P^e(T_l),\,R^e(T_l)\vee T_r\bigr),
&& \quad \text{if} \, e\in{\rm Int}(T_l),\\
&\bigl(T_l\vee P^e(T_r),\,R^e(T_r)\bigr),
&& \quad \text{if} \, e\in{\rm Int}(T_r).
\end{aligned}
\right.
\mlabel{eq:elementary-vertex-cut-recursion}
\end{equation}
\mlabel{def:elementary-vertex-cut}
\end{definition}

\begin{remark}
The elementary cut in Definition~\ref{def:elementary-vertex-cut} is not an admissible cut in the sense of the
Connes--Kreimer~\mcite{Connes1998,Kreimer1998} or Loday--Ronco Hopf coproducts~\mcite{Loday1998, Zhang2020}.  
\end{remark}

The following formula shows that these elementary cuts give exactly the
coproduct $\Delta_{\rm LR}^{\epsilon}$.

\begin{theorem}
For every planar binary tree $T\in\Y$, one has
\begin{equation}
\Delta_{\rm LR}^{\epsilon}(T)
=
\sum_{e\in {\rm Int}(T)} P^e(T)\otimes R^e(T).
\mlabel{eq:cut-formula}
\end{equation}
\mlabel{thm:cut-formula}
\end{theorem}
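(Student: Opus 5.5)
We argue by induction on $|{\rm Int}(T)|$, comparing the recursion \eqref{eq:yucheng} (in the expanded form \eqref{eq:Delta-expanded}) term by term with the three-case recursion \eqref{eq:elementary-vertex-cut-recursion}. For $T=|$ both sides of \eqref{eq:cut-formula} vanish: $\Delta_{\rm LR}^{\epsilon}(|)=0$ and the sum over ${\rm Int}(|)=\emptyset$ is empty. For $T=|\vee|$, \eqref{eq:yucheng} gives $\Delta_{\rm LR}^{\epsilon}(T)=|\otimes|$, since $\Delta_{\rm LR}^{\epsilon}(|)=0$ kills the other two terms. This equals the single summand $P^{\rho_T}(T)\otimes R^{\rho_T}(T)=|\otimes|$.

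For the induction step, take $|{\rm Int}(T)|\ge 2$ and write $T=T_l\vee T_r$. Use the disjoint decomposition ${\rm Int}(T)=\{\rho_T\}\sqcup{\rm Int}(T_l)\sqcup{\rm Int}(T_r)$ to split the right-hand side of \eqref{eq:cut-formula} into three sums.
\begin{itemize}
\item The root contributes $T_l\otimes T_r$, which is the first term of \eqref{eq:yucheng}.
\item By \eqref{eq:elementary-vertex-cut-recursion}, the vertices in ${\rm Int}(T_l)$ contribute
\[
\sum_{e\in{\rm Int}(T_l)}P^e(T_l)\otimes\bigl(R^e(T_l)\vee T_r\bigr)
=(\id\otimes\vee)\Bigl(\sum_{e\in{\rm Int}(T_l)}P^e(T_l)\otimes R^e(T_l)\otimes T_r\Bigr).
\]
By the induction hypothesis applied to $T_l$, which has fewer internal vertices, this equals $(\id\otimes\vee)\bigl(\Delta_{\rm LR}^{\epsilon}(T_l)\otimes T_r\bigr)$.
\item Symmetrically, the vertices in ${\rm Int}(T_r)$ contribute
\[
(\vee\otimes\id)\Bigl(T_l\otimes\sum_{e\in{\rm Int}(T_r)}P^e(T_r)\otimes R^e(T_r)\Bigr)=(\vee\otimes\id)\bigl(T_l\otimes\Delta_{\rm LR}^{\epsilon}(T_r)\bigr).
\]
\end{itemize}
Adding the three contributions gives exactly \eqref{eq:yucheng}, which completes the induction.

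One case needs care. If $T_l=|$ or $T_r=|$, the corresponding vertex set is empty and the matching coproduct term vanishes because $\Delta_{\rm LR}^{\epsilon}(|)=0$, so the identification still holds. Also, \eqref{eq:elementary-vertex-cut-recursion} is stated only for $|{\rm Int}(T)|\ge 2$. This is consistent with the $|{\rm Int}(T)|=1$ base case, because the root clause then returns $(|,|)=(T_l,T_r)$. Beyond this bookkeeping there is no real obstacle: the definition of elementary cuts was designed to mirror \eqref{eq:yucheng}, and the proof only needs the bilinearity of $\vee$ to move the sums inside $(\id\otimes\vee)$ and $(\vee\otimes\id)$.
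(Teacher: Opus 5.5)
Your proof is correct and follows the same route as the paper: induction on $|{\rm Int}(T)|$, splitting ${\rm Int}(T)=\{\rho_T\}\sqcup{\rm Int}(T_l)\sqcup{\rm Int}(T_r)$ and matching each block with the corresponding term of \eqref{eq:yucheng} via the induction hypothesis on $T_l$ and $T_r$. The only difference is that you treat the one-vertex tree as a separate base case, which handles more carefully the fact that \eqref{eq:elementary-vertex-cut-recursion} is stated only for $|{\rm Int}(T)|\ge 2$; the paper's induction step for $|{\rm Int}(T)|\ge 1$ passes over this point without comment.
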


\begin{proof}
We prove \meqref{eq:cut-formula} by induction on $|{\rm Int}(T)|\ge 0$. For the initial step of $|{\rm Int}(T)|=0$, one has $T=|$ and
${\rm Int}(T)=\emptyset$.  Hence
\[
\Delta_{\rm LR}^{\epsilon}(T)
=
\Delta_{\rm LR}^{\epsilon}(|)
=
0
=
\sum_{e\in{\rm Int}(T)}P^e(T)\otimes R^e(T).
\]
For the induction step of $|{\rm Int}(T)|\ge 1$, 
write $T=T_l\vee T_r.$ Then
\begin{align*}
\Delta_{\rm LR}^{\epsilon}(T)
&\overset{\eqref{eq:yucheng}}=
T_l\otimes T_r
+
(\id\otimes \vee)
\bigl(\Delta_{\rm LR}^{\epsilon}(T_l)\otimes T_r\bigr)
\\
&\quad
+
(\vee\otimes \id)
\bigl(T_l\otimes \Delta_{\rm LR}^{\epsilon}(T_r)\bigr)
\\
&=
T_l\otimes T_r
+
\sum_{u\in {\rm Int}(T_l)}
P^{u}(T_l)\otimes \bigl(R^{u}(T_l)\vee T_r\bigr)
\\
&\quad
+
\sum_{w\in {\rm Int}(T_r)}
\bigl(T_l\vee P^{w}(T_r)\bigr)\otimes R^{w}(T_r)
\hspace{1cm}\text{(by the inductive hypothesis)}
\\
&=
P^{\rho_T}(T)\otimes R^{\rho_T}(T)
+
\sum_{u\in {\rm Int}(T_l)}
P^{u}(T)\otimes R^{u}(T)
\\
&\quad
+
\sum_{w\in {\rm Int}(T_r)}
P^{w}(T)\otimes R^{w}(T)
\hspace{1cm}\text{(by Definition~\mref{def:elementary-vertex-cut})}
\\
&=
\sum_{e\in {\rm Int}(T)}
P^{e}(T)\otimes R^{e}(T)
\hspace{1cm}\text{(by the decomposition of ${\rm Int}(T)$)}.
\end{align*}
This proves the induction step and completes the proof.
\end{proof}

To make the combinatorial content of the coproduct formula transparent, we
record the following examples.
\begin{exam}
\noindent
Let
\[
T_1=
\YY{\yy10\yy20
\node[scale=0.8] at (0.75,-0.75) {${\rho}$};
\node[scale=0.8] at (0.25,-0.45) {${v}$};
\node[scale=0.8] at (-0.20,-0.15) {${w}$};
}
\]
with ${\rm Int}(T_1)=\{\rho,v,w\}$. Then
\[
\begin{array}{c|c|c}
e & P^{e}(T_1) & R^{e}(T_1)
\\ \hline
v & \YY{} & \YY{}
\\[1mm]
w & | & \YY{\yy10}
\\[1mm]
\rho & \YY{\yy10} & |
\end{array}
\]
Hence
\begin{align*}
\Delta_{\rm LR}^{\epsilon}(T_1)
&=
P^{v}(T_1)\otimes R^{v}(T_1)
+
P^{w}(T_1)\otimes R^{w}(T_1)
+
P^{\rho}(T_1)\otimes R^{\rho}(T_1)
\\
&=
\YY{}\otimes \YY{}
+
|\otimes \YY{\yy10}
+
\YY{\yy10}\otimes |.
\end{align*}
This agrees with Example~\mref{exam:coproduct-basic}.
\mlabel{ex:cut-formula}
\end{exam}

We now state the resulting infinitesimal bialgebra structure.
\begin{theorem}
The quadruple $(H_{\rm LR}^{\epsilon},\backslash,|,\Delta_{\rm LR}^{\epsilon})$
is a unitary infinitesimal bialgebra.
\mlabel{thm:inf-bialgebra}
\end{theorem}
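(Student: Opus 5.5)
The proof assembles results already established; the one genuinely new ingredient is that $(H_{\rm LR}^{\epsilon},\backslash,|)$ is a unitary associative algebra. There are three things to check against the definition of a unitary infinitesimal bialgebra.
\begin{enumerate}
\item $(H_{\rm LR}^{\epsilon},\backslash,|)$ is a unitary associative algebra.
\item $\Delta_{\rm LR}^{\epsilon}$ is coassociative.
\item The compatibility relation \meqref{eq:infinitesimal-compatibility} holds.
\end{enumerate}
Items (2) and (3) are exactly Proposition~\mref{lem:coassociative} and Lemma~\mref{lem:derivation}. For (3), recall that $S\backslash\Delta_{\rm LR}^{\epsilon}(T)$ and $\Delta_{\rm LR}^{\epsilon}(S)\backslash T$ are computed via the bimodule structure \meqref{de:bimo}. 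This structure is precisely the one used in \meqref{eq:infinitesimal-compatibility} with $\cdot=\backslash$.

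For (1), one may quote Aguiar--Sottile~\mcite{Aguiar2006}, who show that $(H_{\rm LR}^{\epsilon},\backslash,|)$ is the free unitary associative algebra on ${\rm P}(\Y)$. To keep the argument self-contained, I would instead verify associativity directly from the recursion \meqref{eq:backslash-recursion}.

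\textbf{Unit.} The identities $|\backslash T=T=T\backslash |$ are part of the definition. Alternatively, $T\backslash|=T$ follows by induction from \meqref{eq:backslash-recursion}: for $T=T_l\vee T_r$ one gets $T\backslash|=T_l\vee(T_r\backslash|)=T_l\vee T_r$.

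\textbf{Associativity.} I would prove $(R\backslash S)\backslash T=R\backslash(S\backslash T)$ for $R,S,T\in\Y$ by induction on $|{\rm Int}(R)|$. If $R=|$, both sides equal $S\backslash T$. Otherwise write $R=R_l\vee R_r$. Then \meqref{eq:backslash-recursion} gives
\[
(R\backslash S)\backslash T=\bigl(R_l\vee(R_r\backslash S)\bigr)\backslash T=R_l\vee\bigl((R_r\backslash S)\backslash T\bigr).
\]
This uses that $R\backslash S\neq|$ has root decomposition with left branch $R_l$ and right branch $R_r\backslash S$. By the induction hypothesis applied to $R_r$, the right-hand side equals
\[
R_l\vee\bigl(R_r\backslash(S\backslash T)\bigr)=R\backslash(S\backslash T).
\]
Bilinearity then extends the identity to all of $H_{\rm LR}^{\epsilon}$.

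The only real obstacle is making sure \meqref{eq:backslash-recursion} may be applied to $R\backslash S$ itself. This requires that $R\backslash S=R_l\vee(R_r\backslash S)$ is literally the root decomposition, which holds because $\vee$ creates a new root and root decompositions are unique. With (1) in hand, combining it with Proposition~\mref{lem:coassociative} and Lemma~\mref{lem:derivation} gives the theorem. No counit is needed, since the definition does not require one.
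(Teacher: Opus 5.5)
Your proposal is correct and matches the paper's proof, which likewise takes the unitary associative algebra structure of $(H_{\rm LR}^{\epsilon},\backslash,|)$ from Aguiar--Sottile and combines it with Proposition~\ref{lem:coassociative} and Lemma~\ref{lem:derivation}. The paper does not include your inductive check of associativity and of the unit from \eqref{eq:backslash-recursion}; that check is valid and makes the argument self-contained.
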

\begin{proof}
Notice that
$(H_{\rm LR}^{\epsilon},\backslash,|)$ is a unitary associative algebra~\mcite{Aguiar2006}.
Then the result follows from  Proposition~\mref{lem:coassociative} and Lemma~\mref{lem:derivation}.
\end{proof}

\begin{remark}
The coproduct $\Delta_{\rm LR}^{\epsilon}$ is not counitary.  Indeed, since
$\Delta_{\rm LR}^{\epsilon}(|)=0,$
there cannot exist a linear map $\varepsilon:H_{\rm LR}^{\epsilon}\to\bk$ satisfying
\[
(\varepsilon\otimes\id)\Delta_{\rm LR}^{\epsilon}=\id_{H_{\rm LR}^{\epsilon}}
\quad\text{and}\quad
(\id\otimes\varepsilon)\Delta_{\rm LR}^{\epsilon}=\id_{H_{\rm LR}^{\epsilon}}.
\]
Applying the first identity to $|$ would give $0=|$, a contradiction.
\end{remark}

\section{Free unitary infinitesimal \texorpdfstring{$(\backslash,\vee)$}{(backslash,vee)}-bialgebras}
\mlabel{sec:free}

In this section we formulate a universal property   for the unitary infinitesimal
bialgebra obtained in Theorem \ref{thm:inf-bialgebra}.

\subsection{Two one-sided grafting operators}
\mlabel{subsec:onesided}
The binary grafting operation $\vee$ gives two natural one-sided operators by
fixing the trivial tree on the left or on the right.

\begin{definition}
Define two linear maps
\[
B^{r},B^{\ell}:H_{\rm LR}^{\epsilon}
\longrightarrow
H_{\rm LR}^{\epsilon}
\]
by setting, for every tree $T\in\Y$,
\[
B^{r}(T):=T\vee |,
\quad
B^{\ell}(T):=|\vee T.
\]
\mlabel{def:BellBr}
\end{definition}
\begin{exam}
For the one-vertex tree, one has
\[
B^{r}(\YY{})
=
\YY{}\vee |
=
\YY{\yy10},
\quad
B^{\ell}(\YY{})
=
|\vee \YY{}
=
\YY{\yy11}.
\]
Iterating once more gives the two comb examples
\[
B^{r}(\YY{\yy10})
=
\YY{\yy10}\vee |
=
\YY{\yy10\yy20},
\quad
B^{\ell}(\YY{\yy11})
=
|\vee \YY{\yy11}
=
\YY{\yy11\yy23}.
\]
\mlabel{exam:BellBr}
\end{exam}
The coproduct $\Delta_{\rm LR}^{\epsilon}$ behaves recursively with respect to these two one-sided
grafting operators.
\begin{prop}
For every $T\in H_{\rm LR}^{\epsilon}$, one has
\begin{equation}
\Delta_{\rm LR}^{\epsilon}(B^{r}(T))
=
T\otimes |
+
(\id\otimes B^{r})\Delta_{\rm LR}^{\epsilon}(T),
\mlabel{eq:Bell-cocycle-type}
\end{equation}
and
\begin{equation}
\Delta_{\rm LR}^{\epsilon}(B^{\ell}(T))
=
|\otimes T
+
(B^{\ell}\otimes \id)\Delta_{\rm LR}^{\epsilon}(T).
\mlabel{eq:Br-cocycle-type}
\end{equation}
\mlabel{prop:BellBr-coproduct}
\end{prop}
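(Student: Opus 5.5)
Both identities are direct specializations of the recursive definition \meqref{eq:yucheng}, so no induction is needed. By linearity of $B^{r}$, $B^{\ell}$, $\Delta_{\rm LR}^{\epsilon}$, $\id\otimes B^{r}$ and $B^{\ell}\otimes\id$, it suffices to treat a basis tree $T\in\Y$. The plan is to substitute $B^{r}(T)=T\vee|$ (respectively $B^{\ell}(T)=|\vee T$) into \meqref{eq:yucheng} and then use $\Delta_{\rm LR}^{\epsilon}(|)=0$ to remove one of the three terms.

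For \meqref{eq:Bell-cocycle-type}, the tree $T\vee|$ is non-trivial with root decomposition $T_l'=T$ and $T_r'=|$. Applying \meqref{eq:yucheng} gives
\[
\Delta_{\rm LR}^{\epsilon}(T\vee|)=T\otimes|+(\id\otimes\vee)\bigl(\Delta_{\rm LR}^{\epsilon}(T)\otimes|\bigr)+(\vee\otimes\id)\bigl(T\otimes\Delta_{\rm LR}^{\epsilon}(|)\bigr).
\]
The last term vanishes because $\Delta_{\rm LR}^{\epsilon}(|)=0$. In Sweedler notation the middle term is $\sum_{(T)}T_{(1)}\otimes(T_{(2)}\vee|)$, which equals $(\id\otimes B^{r})\Delta_{\rm LR}^{\epsilon}(T)$ by Definition~\ref{def:BellBr}. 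This holds for every $T$, including $T=|$, where both sides reduce to $|\otimes|$.

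For \meqref{eq:Br-cocycle-type}, the argument is symmetric. The root decomposition is now $T_l'=|$ and $T_r'=T$, so in \meqref{eq:yucheng} the middle term vanishes. The remaining terms are $|\otimes T$ and $\sum_{(T)}(|\vee T_{(1)})\otimes T_{(2)}=(B^{\ell}\otimes\id)\Delta_{\rm LR}^{\epsilon}(T)$.

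The only point needing care is bookkeeping: recognizing $(\id\otimes\vee)(X\otimes|)$ as $(\id\otimes B^{r})(X)$ for $X\in H_{\rm LR}^{\epsilon}\otimes H_{\rm LR}^{\epsilon}$, and likewise $(\vee\otimes\id)(|\otimes X)=(B^{\ell}\otimes\id)(X)$. Both follow from bilinearity of $\vee$ checked on pure tensors. As a sanity check, the elementary-cut formula of Theorem~\ref{thm:cut-formula} gives the same result: the new root contributes $T\otimes|$ (respectively $|\otimes T$), and each vertex of $T$ contributes its old cut with $\vee|$ appended on the right factor (respectively $|\vee$ prepended on the left factor), as in \meqref{eq:elementary-vertex-cut-recursion}.
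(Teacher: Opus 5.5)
Your proposal is correct and is the paper's own argument. You substitute $B^{r}(T)=T\vee|$ into \eqref{eq:yucheng}, drop the third term using $\Delta_{\rm LR}^{\epsilon}(|)=0$, and identify the middle term as $(\id\otimes B^{r})\Delta_{\rm LR}^{\epsilon}(T)$; the $B^{\ell}$ case (which the paper only calls ``similar'') and the edge case $T=|$ go through exactly as you wrote.
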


\begin{proof}
It is enough to verify the identities on $T\in\Y$.  For
\meqref{eq:Bell-cocycle-type}, using Definition~\mref{def:BellBr} and
\meqref{eq:yucheng}, 
\begin{align*}
\Delta_{\rm LR}^{\epsilon}(B^{r}(T))
&=
\Delta_{\rm LR}^{\epsilon}(T\vee |)
\\
&\overset{\eqref{eq:yucheng}}=
T\otimes |
+(\id\otimes \vee)\bigl(\Delta_{\rm LR}^{\epsilon}(T)\otimes |\bigr)
+(\vee\otimes \id)\bigl(T\otimes \Delta_{\rm LR}^{\epsilon}(|)\bigr)
\\
&=
T\otimes |
+(\id\otimes \vee)\bigl(\Delta_{\rm LR}^{\epsilon}(T)\otimes |\bigr)
\\
&=
T\otimes |
+
(\id\otimes B^{r})\Delta_{\rm LR}^{\epsilon}(T).
\end{align*}
The proof of \meqref{eq:Br-cocycle-type} is similar.
This completes the proof.
\end{proof}

For the comb trees introduced in~\eqref{eq:right-comb} and~\eqref{eq:left-comb}, the
one-sided grafting operators give
\[
F_n^{r}=(B^{r})^n(|),
\quad
F_n^{\ell}=(B^{\ell})^n(|),
\quad \forall \,  n\ge 0.
\]
As a consequence of Proposition~\mref{prop:BellBr-coproduct}, we obtain
the coproduct formulas for these comb trees.
\begin{coro}
For every $n\ge 0$, one has
\begin{align}
\Delta_{\rm LR}^{\epsilon}(F_n^{r})
&=
\sum_{i=0}^{n-1} F_{n-1-i}^{r}\otimes F_i^{r},
\mlabel{eq:ell-comb-coproduct}
\\
\Delta_{\rm LR}^{\epsilon}(F_n^{\ell})
&=
\sum_{i=0}^{n-1} F_i^{\ell}\otimes F_{n-1-i}^{\ell},
\mlabel{eq:r-comb-coproduct}
\end{align}
where the sum is understood to be zero for $n=0$.
\mlabel{cor:comb-coproduct}
\end{coro}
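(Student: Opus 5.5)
We prove both formulas by induction on $n\ge 0$, using only Proposition~\mref{prop:BellBr-coproduct} together with $F_n^{r}=B^{r}(F_{n-1}^{r})$ and $F_n^{\ell}=B^{\ell}(F_{n-1}^{\ell})$. For $n=0$ both sides vanish, since $\Delta_{\rm LR}^{\epsilon}(|)=0$ and the sum is empty. For $n=1$ the formula reads $\Delta_{\rm LR}^{\epsilon}(|\vee|)=|\otimes|$, which is the case $T=|$ of \meqref{eq:Bell-cocycle-type}. This case is also contained in the general inductive step, so it needs no separate treatment.

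\textbf{Right combs.} For the inductive step, assume \meqref{eq:ell-comb-coproduct} holds for $n-1\ge 0$. Applying \meqref{eq:Bell-cocycle-type} with $T=F_{n-1}^{r}$ gives
\[
\Delta_{\rm LR}^{\epsilon}(F_n^{r})
=F_{n-1}^{r}\otimes |+\sum_{i=0}^{n-2}F_{n-2-i}^{r}\otimes B^{r}(F_i^{r})
=F_{n-1}^{r}\otimes F_0^{r}+\sum_{i=0}^{n-2}F_{n-2-i}^{r}\otimes F_{i+1}^{r}.
\]
Reindex the sum by $j=i+1$, so that $F_{n-2-i}^{r}=F_{n-1-j}^{r}$ and $j$ runs from $1$ to $n-1$. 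The extra term $F_{n-1}^{r}\otimes F_0^{r}$ is exactly the $j=0$ summand. Together these give $\sum_{j=0}^{n-1}F_{n-1-j}^{r}\otimes F_j^{r}$, as claimed.

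\textbf{Left combs.} The argument is symmetric, using \meqref{eq:Br-cocycle-type}:
\[
\Delta_{\rm LR}^{\epsilon}(F_n^{\ell})=|\otimes F_{n-1}^{\ell}+\sum_{i=0}^{n-2}B^{\ell}(F_i^{\ell})\otimes F_{n-2-i}^{\ell}.
\]
Here $B^{\ell}(F_i^{\ell})=F_{i+1}^{\ell}$. After the shift $j=i+1$, the term $|\otimes F_{n-1}^{\ell}$ supplies the $j=0$ summand, which yields \meqref{eq:r-comb-coproduct}.

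There is no real obstacle. The only point requiring care is the index bookkeeping in the shift $j=i+1$: one must check that the boundary term produced by $T\otimes|$ (respectively $|\otimes T$) fills precisely the missing extreme index. One should also note that the inductive hypothesis is applied to the empty sum when $n=1$.
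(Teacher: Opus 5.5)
Your proof is correct and is the same as the paper's: induction on $n$ using $F_n^{r}=B^{r}(F_{n-1}^{r})$ and Proposition~\ref{prop:BellBr-coproduct}, then the shift $j=i+1$, where the boundary term $F_{n-1}^{r}\otimes|$ supplies the $j=0$ summand. The only difference is that you write out the left-comb case in full, while the paper just calls it ``similar''.
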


\begin{proof}
We prove \meqref{eq:ell-comb-coproduct} by induction on $n \geq 0$.
For the initial step of $n=0$, we have $F_0^{r}=|$, and hence
\[
\Delta_{\rm LR}^{\epsilon}(F_0^{r})
=
\Delta_{\rm LR}^{\epsilon}(|)
=
0.
\]
For the induction step of $n\ge 1$, assume that
\[
\Delta_{\rm LR}^{\epsilon}(F_{n-1}^{r})
=
\sum_{i=0}^{n-2}F_{n-2-i}^{r}\otimes F_i^{r}.
\]
Since $F_n^{r}=B^{r}(F_{n-1}^{r})$, Proposition~\mref{prop:BellBr-coproduct} gives
\begin{align*}
\Delta_{\rm LR}^{\epsilon}(F_n^{r})
&\overset{\eqref{eq:Bell-cocycle-type}}=
F_{n-1}^{r}\otimes |
+
(\id\otimes B^{r})
\Delta_{\rm LR}^{\epsilon}(F_{n-1}^{r})
\\
&=
F_{n-1}^{r}\otimes F_0^{r}
+
\sum_{i=0}^{n-2}
F_{n-2-i}^{r}\otimes B^{r}(F_i^{r})
\hspace{1cm}\text{(by the inductive hypothesis)}
\\
&=
F_{n-1}^{r}\otimes F_0^{r}
+
\sum_{i=0}^{n-2}
F_{n-2-i}^{r}\otimes F_{i+1}^{r}
\\
&=
\sum_{i=0}^{n-1}F_{n-1-i}^{r}\otimes F_i^{r}.
\end{align*}
Thus \meqref{eq:ell-comb-coproduct} holds.  The proof of
\meqref{eq:r-comb-coproduct} is similar, using
\meqref{eq:Br-cocycle-type}.
\end{proof}

\subsection{Unitary \texorpdfstring{$(\backslash,\vee)$}{(backslash,vee)}-algebras of planar binary trees}
\mlabel{subsec:vee-alg}

We now isolate the algebraic structure carried by planar binary trees with
the operations $\backslash$ and $\vee$.  This allows us to formulate the
universal property satisfied by
$(H_{\rm LR}^{\epsilon},\backslash,|,\vee).$

The following definition is inspired by the operated-algebra and cocycle
bialgebra viewpoints used for rooted forests \mcite{Guo2009,zhang2019,Zhang2022a} and
by the compatibility between the under product $\backslash$ and the
binary grafting operation $\vee$ on planar binary trees
\mcite{Aguiar2006}.

\begin{definition}
A {\bf unitary $(\backslash,\vee)$-algebra} is a unitary associative algebra
$(H,\backslash_{H},1_H)$
together with a bi-linear map
\[
\vee_H:H\otimes H\to H
\]
such that
\begin{equation}
(x\vee_H y)\backslash_{H} z
=
x\vee_H(y\backslash_{H} z),
\quad \forall\, x,y,z\in H.
\mlabel{eq:vee-backslash-compat}
\end{equation}
A morphism of unitary $(\backslash,\vee)$-algebras is defined as usual.
\mlabel{def:vee-algebra}
\end{definition}

\begin{lemma}
The quadruple
$(H_{\rm LR}^{\epsilon},\backslash,|,\vee)$
is a unitary $(\backslash,\vee)$-algebra.
\mlabel{lem:A-unitary-vee}
\end{lemma}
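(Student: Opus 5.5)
We need to check three things: $(H_{\rm LR}^{\epsilon},\backslash,|)$ is a unitary associative algebra, $\vee$ is bilinear, and the compatibility \meqref{eq:vee-backslash-compat} holds. The first is the Aguiar--Sottile result already invoked in the proof of Theorem~\mref{thm:inf-bialgebra}. The second is immediate from Definition~\mref{de:graft}.

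So the only real content is
\[
(x\vee y)\backslash z=x\vee(y\backslash z),\quad \forall\, x,y,z\in H_{\rm LR}^{\epsilon}.
\]
Both sides are trilinear in $(x,y,z)$: $\vee$ and $\backslash$ are bilinear, and the two sides are composites of them. It therefore suffices to verify the identity on basis elements $x,y,z\in\Y$.

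For trees, set $S:=x\vee y$. This is a non-trivial tree whose root decomposition is exactly $S_l=x$, $S_r=y$, by uniqueness of the root decomposition. The recursion \meqref{eq:backslash-recursion} then gives
\[
(x\vee y)\backslash z=S\backslash z=S_l\vee(S_r\backslash z)=x\vee(y\backslash z),
\]
which is the required identity. No induction is needed.

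The only step needing care is the appeal to \meqref{eq:backslash-recursion}. It is valid only for $S\neq |$, and it uses the identification $(S_l,S_r)=(x,y)$. Both hold because $x\vee y$ always has a root vertex, so $x\vee y\neq|$, and the root decomposition is unique. If one prefers not to rely on the recursion as stated, I would instead argue directly from the geometric definition of $S\backslash T$: the rightmost branch of $x\vee y$ is the rightmost branch of $y$, so attaching $z$ there amounts to replacing $y$ by $y\backslash z$. With the recursion available, however, the verification is a one-line computation.
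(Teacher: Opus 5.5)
Your proposal is correct and matches the paper's proof. The paper also cites Aguiar--Sottile for the unitary associative structure, and it also reads off the compatibility directly from the recursion \eqref{eq:backslash-recursion} applied to $S=x\vee y$. Your reduction to basis trees by trilinearity and your remark on uniqueness of the root decomposition make explicit what the paper leaves implicit.
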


\begin{proof}
The triple $(H_{\rm LR}^{\epsilon},\backslash,|)$ is a unitary associative algebra by Aguiar--Sottile~\mcite{Aguiar2006}, and the compatibility~\meqref{eq:vee-backslash-compat} follows from~\meqref{eq:backslash-recursion}.
\end{proof}

Now we arrive at our main conclusion of this subsection.
\begin{theorem}
The unitary $(\backslash,\vee)$-algebra
$(H_{\rm LR}^{\epsilon},\backslash,|,\vee)$
is  the free unitary
$(\backslash,\vee)$-algebra on the empty set.  
\mlabel{thm:free-vee-empty}
\end{theorem}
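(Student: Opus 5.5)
Given a unitary $(\backslash,\vee)$-algebra $(H,\backslash_H,1_H,\vee_H)$, we must show there is a unique morphism $\phi:H_{\rm LR}^{\epsilon}\to H$ of unitary $(\backslash,\vee)$-algebras. The plan is to define $\phi$ on the basis $\Y$ by recursion on the root decomposition:
\[
\phi(|):=1_H,\qquad \phi(T_l\vee T_r):=\phi(T_l)\vee_H\phi(T_r),
\]
and extend linearly. This is well defined because every non-trivial tree has a unique root decomposition and both branches have fewer internal vertices. By construction $\phi$ preserves the unit and satisfies $\phi(S\vee T)=\phi(S)\vee_H\phi(T)$ for all trees $S,T$, hence by bilinearity for all $S,T\in H_{\rm LR}^{\epsilon}$.

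The main step is multiplicativity, $\phi(S\backslash T)=\phi(S)\backslash_H\phi(T)$. It suffices to check this on trees, and I would induct on $|{\rm Int}(S)|$. If $S=|$, both sides equal $\phi(T)$ because $|$ and $1_H$ are units. If $S=S_l\vee S_r$, then \meqref{eq:backslash-recursion}, the definition of $\phi$, the induction hypothesis for $S_r$, and the compatibility \meqref{eq:vee-backslash-compat} in $H$ give
\[
\phi(S\backslash T)=\phi(S_l)\vee_H\phi(S_r\backslash T)=\phi(S_l)\vee_H\bigl(\phi(S_r)\backslash_H\phi(T)\bigr)=\bigl(\phi(S_l)\vee_H\phi(S_r)\bigr)\backslash_H\phi(T)=\phi(S)\backslash_H\phi(T).
\]
This is the only place the axiom \meqref{eq:vee-backslash-compat} is used. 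It is the step I expect to need the most care, though the recursion makes it short. Lemma~\mref{lem:A-unitary-vee} already shows that $H_{\rm LR}^{\epsilon}$ is itself an object of the category.

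For uniqueness, any morphism $\psi$ must satisfy $\psi(|)=1_H$ and $\psi(T_l\vee T_r)=\psi(T_l)\vee_H\psi(T_r)$. Induction on $|{\rm Int}(T)|$ then gives $\psi=\phi$ on $\Y$, hence everywhere by linearity. Note that associativity of $\backslash$ is never needed in the argument beyond $H$ being an object of the category.
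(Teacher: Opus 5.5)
Your proposal is correct and follows the paper's proof essentially step for step: you define the morphism recursively via the root decomposition, you prove multiplicativity by induction on $|{\rm Int}(S)|$ using \eqref{eq:backslash-recursion} together with the compatibility axiom \eqref{eq:vee-backslash-compat}, and uniqueness is forced by the recursion. Your explicit induction for uniqueness just spells out what the paper states in one line.
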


\begin{proof}
({\bf Existence}). 
Let $(H,\backslash_H,1_H,\vee_H)$ be a unitary
$(\backslash,\vee)$-algebra.  Define the linear map
\[
\Phi:H_{\rm LR}^{\epsilon}\longrightarrow H
\]
on basis trees recursively  by
\begin{equation}
\Phi(|):=1_H,
\quad
\Phi(T_l\vee T_r)
:=
\Phi(T_l)\vee_H\Phi(T_r),
\quad \forall \, T_l,T_r\in\Y.
\mlabel{eq:Phi-recursion}
\end{equation}
We now  verify that $\Phi$ preserves
$\backslash$
\begin{equation}
\Phi(S\backslash T)
=
\Phi(S)\backslash_H\Phi(T),
\quad \forall \, S,T\in\Y,
\mlabel{eq:Phi-backslash}
\end{equation}
by induction on $|{\rm Int}(S)|\geq 0$.
For the initial step of $|{\rm Int}(S)|=0$, one has $S=|$, and hence
\[
\Phi(|\backslash T)
=
\Phi(T)
=
1_H\backslash_H\Phi(T)
=
\Phi(|)\backslash_H\Phi(T).
\]
For the induction step of $|{\rm Int}(S)|\ge 1$, write $S=S_l\vee S_r$.  Then
\begin{align*}
\Phi(S\backslash T)
&\overset{\eqref{eq:backslash-recursion}}=
\Phi\bigl(S_l\vee(S_r\backslash T)\bigr)
\\
&\overset{\eqref{eq:Phi-recursion}}=
\Phi(S_l)\vee_H\Phi(S_r\backslash T)
\\
&=
\Phi(S_l)\vee_H
\bigl(\Phi(S_r)\backslash_H\Phi(T)\bigr)
\hspace{1cm}\text{(by the inductive hypothesis)}
\\
&\overset{\eqref{eq:vee-backslash-compat}}=
\bigl(\Phi(S_l)\vee_H\Phi(S_r)\bigr)
\backslash_H\Phi(T)
\\
&\overset{\eqref{eq:Phi-recursion}}=
\Phi(S_l\vee S_r)\backslash_H\Phi(T)
\\
&=
\Phi(S)\backslash_H\Phi(T).
\end{align*}
Thus $\Phi$ is compatible with the products $\backslash$ and $\backslash_H$.
By \meqref{eq:Phi-recursion}, it also preserves the unit and the grafting
operation.  Hence $\Phi$ is a morphism of unitary
$(\backslash,\vee)$-algebras.

({\bf Uniqueness}).  It follows from \meqref{eq:Phi-recursion}.  
\end{proof}

\subsection{Unitary infinitesimal \texorpdfstring{$(\backslash,\vee)$}{(backslash,vee)}-bialgebras}
\mlabel{subsec:inf-vee-bialg}

We now add the infinitesimal coproduct to the preceding
$(\backslash,\vee)$-algebraic structure and require it to be compatible with
the grafting operation.

\begin{definition}
A {\bf unitary infinitesimal $(\backslash,\vee)$-bialgebra} is a quintuple
$(H,\backslash_{H},1_H,\Delta_H,\vee_H)$
such that

(a) $(H,\backslash_{H},1_H,\vee_H)$ is a unitary
$(\backslash,\vee)$-algebra;

(b) $(H,\backslash_{H},1_H,\Delta_H)$ is a unitary infinitesimal bialgebra; 

(c) The operation $\vee_{H}$ satisfies the compatibility
\begin{equation}
\Delta_H(x\vee_H y)
=
x\otimes y
+
(\id\otimes \vee_H)\bigl(\Delta_H(x)\otimes y\bigr)
+
(\vee_H\otimes \id)\bigl(x\otimes \Delta_H(y)\bigr),
\quad \forall\, x,y\in H.
\mlabel{eq:vee-recursive-compat}
\end{equation}

A morphism of unitary infinitesimal $(\backslash,\vee)$-bialgebras is defined as usual.
\mlabel{def:inf-vee-bialgebra}
\end{definition}

\begin{prop}
The quintuple
$(H_{\rm LR}^{\epsilon},\backslash,|,\Delta_{\rm LR}^{\epsilon},\vee)$
is a unitary infinitesimal $(\backslash,\vee)$-bialgebra.
\mlabel{prop:HLR-inf-vee}
\end{prop}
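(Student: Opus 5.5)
The plan is to verify the three conditions (a)--(c) of Definition~\ref{def:inf-vee-bialgebra} one at a time, using only results already established.

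For condition (a), Lemma~\ref{lem:A-unitary-vee} states exactly that $(H_{\rm LR}^{\epsilon},\backslash,|,\vee)$ is a unitary $(\backslash,\vee)$-algebra. The compatibility \eqref{eq:vee-backslash-compat} holds on basis trees by \eqref{eq:backslash-recursion}, and it extends to all elements by trilinearity.

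For condition (b), Theorem~\ref{thm:inf-bialgebra} gives that $(H_{\rm LR}^{\epsilon},\backslash,|,\Delta_{\rm LR}^{\epsilon})$ is a unitary infinitesimal bialgebra. This rests on coassociativity (Proposition~\ref{lem:coassociative}) and on the derivation rule (Lemma~\ref{lem:derivation}).

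For condition (c), I would observe that \eqref{eq:vee-recursive-compat} is precisely the defining recursion \eqref{eq:yucheng} whenever $x=T_l$ and $y=T_r$ are basis trees. This includes the case where either tree equals $|$, because $T_l\vee T_r\neq|$ always holds, so the second branch of \eqref{eq:yucheng} is the one that applies. To pass to arbitrary $x,y\in H_{\rm LR}^{\epsilon}$, I note that both sides of \eqref{eq:vee-recursive-compat} are bilinear in $(x,y)$:
\begin{itemize}
\item the left side is bilinear because $\vee$ is bilinear and $\Delta_{\rm LR}^{\epsilon}$ is linear;
\item on the right side, $x\otimes y$, $(\id\otimes\vee)(\Delta_{\rm LR}^{\epsilon}(x)\otimes y)$ and $(\vee\otimes\id)(x\otimes\Delta_{\rm LR}^{\epsilon}(y))$ are each bilinear.
\end{itemize}
Since the two sides agree on the basis $\Y\times\Y$, they agree everywhere.

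There is no genuine obstacle here. The only point requiring care is that the recursion \eqref{eq:yucheng} is stated for basis trees, so the bilinearity extension must be made explicit.
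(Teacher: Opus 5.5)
Your proposal is correct and is exactly the paper's proof: (a) comes from Lemma~\ref{lem:A-unitary-vee}, (b) from Theorem~\ref{thm:inf-bialgebra}, and (c) directly from the defining recursion \eqref{eq:yucheng}. Your explicit remarks about extending (c) from basis trees by bilinearity, and about $T_l\vee T_r\neq|$ selecting the second branch of the recursion, fill in details the paper leaves implicit without changing the argument.
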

\begin{proof}
By Lemma~\mref{lem:A-unitary-vee},
$(H_{\rm LR}^{\epsilon},\backslash,|,\vee)
$
is a unitary $(\backslash,\vee)$-algebra.  By
Theorem~\mref{thm:inf-bialgebra},
$(H_{\rm LR}^{\epsilon},\backslash,|,\Delta_{\rm LR}^{\epsilon})$
is a unitary infinitesimal bialgebra. The compatibility \meqref{eq:vee-recursive-compat} follows directly from
\meqref{eq:yucheng}.
\end{proof}

Having identified the planar binary-tree object inside this category, we now
record its corresponding universal property.

\begin{theorem}
The unitary infinitesimal $(\backslash,\vee)$-bialgebra
$(H_{\rm LR}^{\epsilon},\backslash,|,
\Delta_{\rm LR}^{\epsilon},\vee)$
is the free unitary infinitesimal $(\backslash,\vee)$-bialgebra on the empty
set.
\mlabel{thm:free-inf-vee}
\end{theorem}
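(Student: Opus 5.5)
We take the free unitary $(\backslash,\vee)$-algebra morphism from Theorem~\ref{thm:free-vee-empty} and show that it automatically respects the coproducts. Let $(H,\backslash_H,1_H,\Delta_H,\vee_H)$ be an arbitrary unitary infinitesimal $(\backslash,\vee)$-bialgebra. By Theorem~\ref{thm:free-vee-empty}, applied to the underlying unitary $(\backslash,\vee)$-algebra $(H,\backslash_H,1_H,\vee_H)$, there is a unique morphism of unitary $(\backslash,\vee)$-algebras $\Phi:H_{\rm LR}^{\epsilon}\to H$. It is given recursively by \eqref{eq:Phi-recursion}. Uniqueness in the bialgebra category is then immediate: any morphism of unitary infinitesimal $(\backslash,\vee)$-bialgebras is in particular a morphism of $(\backslash,\vee)$-algebras, so it must equal $\Phi$. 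Existence reduces to a single claim,
\[
(\Phi\otimes\Phi)\,\Delta_{\rm LR}^{\epsilon}(T)=\Delta_H(\Phi(T)),\quad \forall\, T\in\Y.
\]

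We prove this claim by induction on $|{\rm Int}(T)|$.

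\emph{Base case.} Here $T=|$, so $\Phi(|)=1_H$ and the left side is $0$. We need $\Delta_H(1_H)=0$. This follows from the infinitesimal rule \eqref{eq:infinitesimal-compatibility} in $H$: taking $a=b=1_H$ gives $\Delta_H(1_H)=2\Delta_H(1_H)$, hence $\Delta_H(1_H)=0$.

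\emph{Inductive step.} Write $T=T_l\vee T_r$. Then $\Phi(T)=\Phi(T_l)\vee_H\Phi(T_r)$, and we expand $\Delta_H$ using \eqref{eq:vee-recursive-compat}:
\[
\Delta_H(\Phi(T))=\Phi(T_l)\otimes\Phi(T_r)+(\id\otimes\vee_H)\bigl(\Delta_H(\Phi(T_l))\otimes\Phi(T_r)\bigr)+(\vee_H\otimes\id)\bigl(\Phi(T_l)\otimes\Delta_H(\Phi(T_r))\bigr).
\]
By the induction hypothesis, $\Delta_H(\Phi(T_l))=(\Phi\otimes\Phi)\Delta_{\rm LR}^{\epsilon}(T_l)$, and similarly for $T_r$. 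Since $\Phi$ intertwines $\vee$ and $\vee_H$, we have
\[
(\id\otimes\vee_H)\circ(\Phi\otimes\Phi\otimes\Phi)=(\Phi\otimes\Phi)\circ(\id\otimes\vee),
\]
and the analogous identity on the other side. Applying $\Phi\otimes\Phi$ to \eqref{eq:yucheng} then yields exactly the same three terms, which closes the induction. The claim extends from basis trees to all of $H_{\rm LR}^{\epsilon}$ by linearity.

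The only points needing care are the vanishing $\Delta_H(1_H)=0$, which requires the derivation rule rather than the $\vee$-compatibility, and the bookkeeping that $\Phi\otimes\Phi$ commutes with $\id\otimes\vee$ and $\vee\otimes\id$. Neither is a serious obstacle. The substantive content is that the coproduct is forced by the $\vee$-recursion alone, so \eqref{eq:vee-recursive-compat} determines $\Delta_H$ on the image of $\Phi$. It is worth noting that the $\backslash$-compatibility of $\Delta_{\rm LR}^{\epsilon}$ (Lemma~\ref{lem:derivation}) is never used in the induction. It is needed only to know, via Proposition~\ref{prop:HLR-inf-vee}, that $H_{\rm LR}^{\epsilon}$ lies in the category.
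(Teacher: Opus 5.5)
Your proposal is correct and follows essentially the same route as the paper. Both arguments take the unique $(\backslash,\vee)$-algebra morphism $\Phi$ from Theorem~\ref{thm:free-vee-empty}, obtain $\Delta_H(1_H)=0$ from the infinitesimal rule, close the induction on $|{\rm Int}(T)|$ via \eqref{eq:vee-recursive-compat}, and deduce uniqueness from the algebra-level freeness.
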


\begin{proof}
Let
$(H,\backslash_{H},1_H,\Delta_H,\vee_H)$
be a unitary infinitesimal $(\backslash,\vee)$-bialgebra.  By
Theorem~\mref{thm:free-vee-empty}, there exists a unique morphism $\Phi:H_{\rm LR}^{\epsilon}\longrightarrow H$ of unitary
$(\backslash,\vee)$-algebras
satisfying
\[
\Phi(|)=1_H,
\quad
\Phi(T_l\vee T_r)=\Phi(T_l)\vee_H\Phi(T_r),
\quad \forall\, T_l,T_r\in\Y.
\]
It remains to prove that $\Phi$ is compatible with the coproducts, namely,
\begin{equation}
(\Phi\otimes \Phi)\Delta_{\rm LR}^{\epsilon}(T)
=
\Delta_H(\Phi(T)),
\quad \forall\, T\in \Y.
\mlabel{eq:Phi-compatible-coproduct}
\end{equation}
We prove \meqref{eq:Phi-compatible-coproduct} by induction on
$|{\rm Int}(T)|\geq 0$.
For the initial step of $\lvert{\rm Int}(T)\rvert=0$, we have $T=|$.  Since
$\Delta_{\rm LR}^{\epsilon}(|)=0,$
we get
\[
(\Phi\otimes \Phi)\Delta_{\rm LR}^{\epsilon}(|)=0.
\]
Also, the infinitesimal compatibility in $H$ gives
\[
\Delta_H(1_H)
=
\Delta_H(1_H\backslash_H 1_H)
\overset{\eqref{eq:infinitesimal-compatibility}}=
1_H\backslash_H \Delta_H(1_H)
+
\Delta_H(1_H)\backslash_H 1_H
=
2\Delta_H(1_H),
\]
hence $\Delta_H(1_H)=0$.  Since $\Phi(|)=1_H$, we get
\[
\Delta_H(\Phi(|))=0.
\]
Thus \meqref{eq:Phi-compatible-coproduct} holds for
$\lvert{\rm Int}(T)\rvert=0$.

For the induction step of $|{\rm Int}(T)|\ge 1$, write
$T=T_l\vee T_r$.  We compute
\begin{align*}
(\Phi\otimes \Phi)\Delta_{\rm LR}^{\epsilon}(T)
&\overset{\eqref{eq:yucheng}}=
(\Phi\otimes \Phi)\Bigl(
T_l\otimes T_r
+
(\id\otimes \vee)
\bigl(\Delta_{\rm LR}^{\epsilon}(T_l)\otimes T_r\bigr)
\\
&\quad
+
(\vee\otimes \id)
\bigl(T_l\otimes \Delta_{\rm LR}^{\epsilon}(T_r)\bigr)
\Bigr)
\\
&\overset{\eqref{eq:Phi-recursion}}=
\Phi(T_l)\otimes \Phi(T_r)
+
(\id\otimes \vee_H)
\bigl((\Phi\otimes \Phi)\Delta_{\rm LR}^{\epsilon}(T_l)
\otimes \Phi(T_r)\bigr)
\\
&\quad
+
(\vee_H\otimes \id)
\bigl(\Phi(T_l)\otimes
(\Phi\otimes \Phi)\Delta_{\rm LR}^{\epsilon}(T_r)\bigr)
\\
&=
\Phi(T_l)\otimes \Phi(T_r)
+
(\id\otimes \vee_H)
\bigl(\Delta_H(\Phi(T_l))\otimes \Phi(T_r)\bigr)
\\
&\quad
+
(\vee_H\otimes \id)
\bigl(\Phi(T_l)\otimes \Delta_H(\Phi(T_r))\bigr)
\hspace{1cm}\text{(by the inductive hypothesis)}
\\
&\overset{\eqref{eq:vee-recursive-compat}}=
\Delta_H\bigl(\Phi(T_l)\vee_H \Phi(T_r)\bigr)
\\
&\overset{\eqref{eq:Phi-recursion}}=
\Delta_H\bigl(\Phi(T_l\vee T_r)\bigr)
\\
&=
\Delta_H(\Phi(T)).
\end{align*}
This proves \meqref{eq:Phi-compatible-coproduct} for
$\lvert{\rm Int}(T)\rvert\ge 1$ and completes the induction.

Therefore $\Phi$ preserves the coproduct.  Since $\Phi$ is already a morphism
of unitary $(\backslash,\vee)$-algebras, it is a morphism of unitary
infinitesimal $(\backslash,\vee)$-bialgebras.  Finally, uniqueness follows
from Theorem~\mref{thm:free-vee-empty}.  This completes the proof.
\end{proof}

\section{A bridge from planar rooted forests to planar binary trees}\mlabel{sec:bridge}

In this section we compare the rooted-forest infinitesimal bialgebra $(\bk\mathcal F,\cdot,\funit,\Delta_{\rm RT}^{\epsilon})$~\mcite{Zhang2022a}
 with the
planar binary-tree infinitesimal bialgebra $(H_{\rm LR}^{\epsilon},\backslash,|,\Delta_{\rm LR}^{\epsilon})$ constructed above.  We build an
algebra isomorphism
$\Theta:\bk\mathcal F\longrightarrow H_{\rm LR}^{\epsilon}$
using the recursive structure of rooted forests and the
$\backslash$-factorization of planar binary trees, and then prove that
$\Theta$ transports $\Delta_{\rm RT}^{\epsilon}$ to
$\Delta_{\rm LR}^{\epsilon}$.

Let $\mathcal T$ and $\mathcal F$ denote the sets of planar rooted trees and
planar rooted forests, respectively.    The grafting operation
\[
B^+:\mathcal F\longrightarrow \mathcal T\subseteq \mathcal F
\]
sends a forest to the tree obtained by grafting its roots to a new common
root.   

We recall from~\mcite{Zhang2022a} that the $\bk$-module $\bk\mathcal F$,
equipped with forest concatenation product $\cdot$, unit $\funit$, and the
infinitesimal coproduct
\[
\Delta_{\rm RT}^{\epsilon}:\bk\mathcal F\longrightarrow
\bk\mathcal F\otimes \bk\mathcal F,
\]
is a unitary infinitesimal bialgebra (of weight zero).  Moreover,
\begin{equation}
\Delta_{\rm RT}^{\epsilon}(FG)
=
F\cdot \Delta_{\rm RT}^{\epsilon}(G)
+
\Delta_{\rm RT}^{\epsilon}(F)\cdot G,
\quad \forall \, F,G\in \bk\mathcal F,
\mlabel{eq:rt-derivation}
\end{equation}
and
\begin{equation}
\Delta_{\rm RT}^{\epsilon}(B^+(F))
=
F\otimes \funit
+
(\id\otimes B^+)\Delta_{\rm RT}^{\epsilon}(F),
\quad \forall \, F\in \bk\mathcal F.
\mlabel{eq:rt-cocycle}
\end{equation}
In particular,
$\Delta_{\rm RT}^{\epsilon}(\funit)=0$~\mcite{Zhang2022a}.
Under the correspondence used below, the grafting operator $B^+$ corresponds
to the operator $B^{r}$
on planar binary trees, and forest concatenation corresponds to the
associative product $\backslash$ on $H_{\rm LR}^{\epsilon}$.  Hence the
infinitesimal coproduct $\Delta_{\rm RT}^{\epsilon}$ should correspond, via a
bridge map, to $\Delta_{\rm LR}^{\epsilon}$.

The key point is that on the rooted-forest side the coproduct is governed by
the infinitesimal $1$-cocycle condition for $B^+$ together with the derivation
law for concatenation, whereas on the planar binary tree side
$\Delta_{\rm LR}^{\epsilon}$ is governed by the corresponding recursive identity
for $B^{r}$ together with the derivation law for $\backslash$. 

\begin{remark}
There are two reasons for using $B^r$, rather than $B^\ell$, in the forthcoming proposition.
\begin{enumerate}
\item The $B^r$-recursion~(\mref{eq:uniqblone}) below 
has the same one-sided form as the infinitesimal $1$-cocycle formula for
planar rooted forests~\mcite{Zhang2022a}:
\[
\Delta_{\rm RT}^{\epsilon}(B^+(F))
=
F\otimes \funit
+
(\id\otimes B^+)\Delta_{\rm RT}^{\epsilon}(F).
\]
By contrast, the $B^\ell$-identity \meqref{eq:Br-cocycle-type} has the
opposite form
\[
\Delta_{\rm LR}^{\epsilon}\bigl(B^\ell(T)\bigr)
=
|\otimes T
+
(B^\ell\otimes\id)\Delta_{\rm LR}^{\epsilon}(T),
\]
and therefore does not match the rooted-forest $1$-cocycle pattern.

\item The choice of $B^r$ is made to be compatible with the progressive tree:
\[
{\rm P}(\Y)
=
\{ B^r(U) \mid U\in\Y\}.
\]
Here $B^r$ parametrizes the generators on which the uniqueness argument starts,
whereas $B^\ell$ does not.
\end{enumerate}
\end{remark}

This leads to the following uniqueness characterization of
$\Delta_{\rm LR}^{\epsilon}$.

\begin{prop}
The coproduct $\Delta_{\rm LR}^{\epsilon}$ is uniquely determined by the
following three conditions:
\begin{align}
\Delta_{\rm LR}^{\epsilon}(|)
&=0,
\notag
\\
\Delta_{\rm LR}^{\epsilon}\bigl(B^{r}(T)\bigr)
&=
T\otimes |
+
(\id\otimes B^{r})\Delta_{\rm LR}^{\epsilon}(T),
\quad \forall \, T\in H_{\rm LR}^{\epsilon},
\mlabel{eq:uniqblone}
\\
\Delta_{\rm LR}^{\epsilon}(S\backslash T)
&=
S\backslash \Delta_{\rm LR}^{\epsilon}(T)
+
\Delta_{\rm LR}^{\epsilon}(S)\backslash T,
\quad \forall \, S\in H_{\rm LR}^{\epsilon},\ T\in \{|\}\sqcup {\rm P}(\Y).
\mlabel{eq:uniqbltwo}
\end{align}
\mlabel{prop:uniqbl}
\end{prop}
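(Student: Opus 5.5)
The statement has two parts. First, $\DeltaLR$ satisfies the three conditions. Second, any linear map $\Delta':H_{\rm LR}^{\epsilon}\to H_{\rm LR}^{\epsilon}\otimes H_{\rm LR}^{\epsilon}$ satisfying them equals $\DeltaLR$.

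Existence is already available. $\DeltaLR(|)=0$ holds by definition \meqref{eq:yucheng}, the $B^r$-recursion \meqref{eq:uniqblone} is \meqref{eq:Bell-cocycle-type} of Proposition~\mref{prop:BellBr-coproduct}, and \meqref{eq:uniqbltwo} is a special case of Lemma~\mref{lem:derivation}.

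For uniqueness, let $\Delta'$ satisfy the three conditions. I will show $\Delta'(T)=\DeltaLR(T)$ for every $T\in\Y$ by strong induction on $|{\rm Int}(T)|$. The case $T=|$ is the first condition. For $|{\rm Int}(T)|\ge 1$, I use the unique factorization $T=U_1\backslash\cdots\backslash U_m$ with $U_i\in{\rm P}(\Y)$ recalled from Aguiar--Sottile. I split into two cases.

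\emph{Case $m=1$.} Here $T$ is progressive, so $T=B^r(U)$ with $U=T_l$ and $|{\rm Int}(U)|=|{\rm Int}(T)|-1$. Applying \meqref{eq:uniqblone} to both $\Delta'$ and $\DeltaLR$ gives $\Delta'(T)=U\otimes|+(\id\otimes B^r)\Delta'(U)$, and the same formula with $\DeltaLR$. By the induction hypothesis $\Delta'(U)=\DeltaLR(U)$, so the two sides agree.

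\emph{Case $m\ge 2$.} Write $T=S\backslash U_m$ with $S=U_1\backslash\cdots\backslash U_{m-1}$. Since $U_m\in{\rm P}(\Y)$, \meqref{eq:uniqbltwo} applies to both maps:
\[
\Delta'(T)=S\backslash\Delta'(U_m)+\Delta'(S)\backslash U_m .
\]
The grafting $\vee$ adds exactly one internal vertex, so by \meqref{eq:backslash-recursion} and induction on $S$ we get $|{\rm Int}(S\backslash U)|=|{\rm Int}(S)|+|{\rm Int}(U)|$. Since $S$ and $U_m$ are both non-trivial, each has strictly fewer internal vertices than $T$. The induction hypothesis therefore replaces $\Delta'$ by $\DeltaLR$ on the right-hand side, and Lemma~\mref{lem:derivation} shows the result equals $\DeltaLR(T)$. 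Linearity then extends the equality from $\Y$ to all of $H_{\rm LR}^{\epsilon}$.

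The only delicate point is that \meqref{eq:uniqbltwo} is assumed only for right factors in $\{|\}\sqcup{\rm P}(\Y)$. This is why I peel off the last progressive factor $U_m$ rather than an arbitrary one; if this restriction is ignored, the argument would appear to need the derivation rule for arbitrary $T$. The other ingredient to check is the additivity of internal vertices under $\backslash$, which is what makes the induction strictly decreasing.
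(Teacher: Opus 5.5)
Your proof is correct and follows essentially the same route as the paper. It inducts on $|{\rm Int}(T)|$, using the $B^{r}$-recursion for progressive trees and, for all other trees, the restricted derivation rule together with the unique $\backslash$-factorization into progressive trees. The only difference is organizational: the paper first reduces to the generators $\{|\}\sqcup{\rm P}(\Y)$ and expands $\widetilde{\Delta}(U)$ as a full Leibniz sum, whereas your single strong induction over all trees peels off the last progressive factor, which makes explicit why the derivation rule is only needed with a right factor in $\{|\}\sqcup{\rm P}(\Y)$.
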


\begin{proof}
Let
\[
\widetilde{\Delta}:H_{\rm LR}^{\epsilon}\to
H_{\rm LR}^{\epsilon}\otimes H_{\rm LR}^{\epsilon}
\]
be a coproduct satisfying the same three conditions.  We want to prove that $\Delta_{\rm LR}^{\epsilon} = \widetilde{\Delta}$.
Notice that $(H_{\rm LR}^{\epsilon},\backslash,|)$ is freely generated by progressive
trees as a unitary associative algebra~\mcite{Aguiar2006}. So we are left to verify that 
\[
\widetilde{\Delta}(T)
=
\Delta_{\rm LR}^{\epsilon}(T),
\quad
\forall\, T\in \{|\,\}\sqcup{\rm P}(\Y),
\]
which is done by induction on $|{\rm Int}(T)|\ge 0$.

For the initial step of $|{\rm Int}(T)|=0$, one has $T=|$.  Hence
\[
\widetilde{\Delta}(T)
=
\widetilde{\Delta}(|)
=
0
=
\Delta_{\rm LR}^{\epsilon}(|)
=
\Delta_{\rm LR}^{\epsilon}(T).
\]
For the induction step of $|{\rm Int}(T)|\ge 1$, the tree $T$ is non-trivial
and progressive.  Hence there exists a unique $U\in\Y$ such that
\[
T=B^{r}(U)=U\vee |.
\]
By the defining condition for $\widetilde{\Delta}$ and by
\meqref{eq:uniqblone},
\begin{align}
\widetilde{\Delta}(T)
&=
U\otimes |
+
(\id\otimes B^{r})\widetilde{\Delta}(U),
\mlabel{eq:uniq-step-tilde}
\\
\Delta_{\rm LR}^{\epsilon}(T)
&=
U\otimes |
+
(\id\otimes B^{r})\Delta_{\rm LR}^{\epsilon}(U).
\mlabel{eq:uniq-step-delta}
\end{align}
Write the unique $\backslash$-factorization of $U$ as
\[
U=U_1\backslash\cdots\backslash U_m,
\,\text{ where }\, U_i\in{\rm P}(\Y)\,\text{ and }\, m\ge 0.
\]
Here we employ the convention that $U=|$ if $m=0$.  
For $m\ge 1$, one has
$|{\rm Int}(U_i)|<|{\rm Int}(T)|$, and hence the induction hypothesis gives
\[
\widetilde{\Delta}(U_i)
=
\Delta_{\rm LR}^{\epsilon}(U_i),
\quad \forall\, i=1,\ldots,m.
\]
Since both coproducts satisfy the same derivation rule, the factorization
$U=U_1\backslash\cdots\backslash U_m$ gives
\begin{align*}
\widetilde{\Delta}(U)
&=
\sum_{j=1}^{m}
(U_1\backslash\cdots\backslash U_{j-1})
\backslash \widetilde{\Delta}(U_j)
\backslash
(U_{j+1}\backslash\cdots\backslash U_m)
\\
&=
\sum_{j=1}^{m}
(U_1\backslash\cdots\backslash U_{j-1})
\backslash \Delta_{\rm LR}^{\epsilon}(U_j)
\backslash
(U_{j+1}\backslash\cdots\backslash U_m)
\hspace{1cm}\text{(by the inductive hypothesis)}
\\
&=
\Delta_{\rm LR}^{\epsilon}(U),
\end{align*}
where the sum is understood to be zero for $m=0$.  Hence \meqref{eq:uniq-step-tilde} and \meqref{eq:uniq-step-delta} imply $\widetilde{\Delta}(T)=\Delta_{\rm LR}^{\epsilon}(T).$ This completes the induction and hence the proof.
\end{proof}

We now construct the algebra isomorphism which realizes the correspondence
between forest concatenation and the under product $\backslash$.

\begin{prop}
There exists a unique isomorphism of unitary associative algebras
\begin{equation}
\Theta:
(\bk\mathcal F,\cdot,\funit)
\longrightarrow
(H_{\rm LR}^{\epsilon},\backslash,|)
\mlabel{isomorp}
\end{equation}
such that
\[
\Theta\bigl(B^+(F)\bigr)
=
B^{r}\bigl(\Theta(F)\bigr)
=
\Theta(F)\vee |,
\quad \forall \, F\in\mathcal F.
\]
\mlabel{prop:theta-alg-iso}
\end{prop}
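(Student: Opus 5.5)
We define $\Theta$ on the forest basis $\mathcal F$ by recursion on the number of vertices, using the unique decomposition of a planar rooted forest into trees. Every nonempty forest $F$ factors uniquely as $F=t_1\cdots t_m$ with $t_i\in\mathcal T$, and every tree satisfies $t=B^+(G)$ for a unique forest $G$ with one vertex fewer. We set
\[
\Theta(\funit):=|,\qquad \Theta\bigl(B^+(G)\bigr):=\Theta(G)\vee |,\qquad \Theta(t_1\cdots t_m):=\Theta(t_1)\backslash\cdots\backslash\Theta(t_m),
\]
and extend linearly. Well-definedness follows from the uniqueness of these decompositions. By construction $\Theta$ is unital and $\Theta(B^+(F))=B^r(\Theta(F))$.

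Multiplicativity is immediate on basis forests. If $F=t_1\cdots t_m$ and $G=s_1\cdots s_k$, the concatenation $FG$ has tree factorization $t_1\cdots t_m s_1\cdots s_k$. Associativity of $\backslash$ then gives $\Theta(FG)=\Theta(F)\backslash\Theta(G)$, and bilinearity extends this to all of $\bk\mathcal F$.

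For bijectivity we use the structure of both sides. The algebra $(\bk\mathcal F,\cdot,\funit)$ is the free unitary associative algebra on $\mathcal T$, and $(H_{\rm LR}^{\epsilon},\backslash,|)$ is free on ${\rm P}(\Y)$ by Aguiar--Sottile. It therefore suffices to show that $\Theta$ restricts to a bijection $\mathcal T\to{\rm P}(\Y)$ and is a bijection $\mathcal F\to\Y$. We prove this by simultaneous induction on size.

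First we check that $\Theta$ maps forests with $n$ vertices to trees in $\Y_n$. This holds by induction: $|{\rm Int}(U\vee|)|=|{\rm Int}(U)|+1$, and $|{\rm Int}(S\backslash T)|=|{\rm Int}(S)|+|{\rm Int}(T)|$, which follows from \eqref{eq:backslash-recursion}.

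The image of a tree $B^+(G)$ is $\Theta(G)\vee|$, which is progressive. Conversely, every progressive tree has the form $U\vee|$ with $U$ unique. So if $\Theta$ is bijective from forests with $n-1$ vertices onto $\Y_{n-1}$, it is bijective from trees with $n$ vertices onto progressive trees in $\Y_n$.

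Combining this with the unique progressive factorization $T=U_1\backslash\cdots\backslash U_m$ and the unique tree factorization of forests, each factor having at most $n$ vertices, gives bijectivity on forests with $n$ vertices. Hence $\Theta$ maps a basis bijectively onto a basis and is a linear isomorphism, and therefore an algebra isomorphism.

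For uniqueness, let $\Theta'$ be another unitary algebra isomorphism with $\Theta'\circ B^+=B^r\circ\Theta'$. Then $\Theta'(\funit)=|$. Induction on the number of vertices forces $\Theta'(B^+(G))=\Theta'(G)\vee|=\Theta(B^+(G))$, and multiplicativity then forces agreement on all forests.

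The main obstacle is the bijectivity bookkeeping: we must ensure the induction on size is set up so that tree factors and progressive factors are handled at the correct degrees. The remaining steps are formal consequences of freeness.
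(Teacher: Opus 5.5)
Your proposal is correct and follows essentially the same route as the paper: $\Theta$ is defined recursively through $B^+$ and the tree factorization of forests, and bijectivity comes from the unique progressive $\backslash$-factorization together with induction on size. The only difference is bookkeeping. The paper proves surjectivity and injectivity of $\theta:\mathcal T\to{\rm P}(\Y)$ separately and then extends to forests, whereas you get bijectivity on trees and forests in one simultaneous induction, which is equally valid.
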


\begin{proof}
We first construct
\[
\theta:\mathcal T\longrightarrow {\rm P}(\Y).
\]
Every $t\in\mathcal T$ can be uniquely written as $t=B^+(F)$ with
$F=t_1\cdots t_m\in\mathcal F$ and $m\geq 0$. Define $\theta$ recursively by
\begin{equation}
\theta\bigl(B^+(F)\bigr)
:=
\bigl(\theta(t_1)\backslash\cdots\backslash\theta(t_m)\bigr)\vee |,
\mlabel{eq:def-theta-tree}
\end{equation}
where the empty $\backslash$-product is understood to be $|$.

We complete the proof in three steps: first proving that $\theta$ is
surjective, then injective, and finally extending it from rooted trees to
rooted forests.

\noindent\textbf{Step 1.} (Surjectivity of $\theta$).  Let $T'\in{\rm P}(\Y)$ and set
$k=|{\rm Int}(T')|$.  We argue by induction on $k\geq 1$.
For the initial step of $k=1$, we have
\[
T'=|\vee|=\theta(B^+(\funit)).
\]
For the induction step of $k\ge 2$, write $T'=u\vee |$.  Write the unique $\backslash$-factorization of $u$ into progressive trees as
\[
u=U_1\backslash\cdots\backslash U_m.
\]
Here $U_i\in{\rm P}(\Y)$ for $i=1,\ldots,m$,
with the convention that the empty product is $|$.  Since
$|{\rm Int}(U_i)|<|{\rm Int}(T')|$, the induction hypothesis gives
$t_i\in\mathcal T$ such that $\theta(t_i)=U_i$.  For
$F=t_1\cdots t_m$,  \meqref{eq:def-theta-tree} gives
\[
\theta(B^+(F))
=
\bigl(\theta(t_1)\backslash\cdots\backslash\theta(t_m)\bigr)\vee |
=
u\vee |
=
T'.
\]
Thus $\theta$ is surjective.

\noindent\textbf{Step 2.} (Injectivity of $\theta$).  First note the size identity
\begin{equation}
|{\rm Int}(\theta(t))|=|t|,
\quad \forall \, t\in\mathcal T,
\mlabel{eq:theta-size}
\end{equation}
where $|t|$ denotes the number of vertices of $t$.  Indeed, if
$t=B^+(t_1\cdots t_m)$, then induction gives
\[
|{\rm Int}(\theta(t))|
=
1+\sum_{i=1}^m |{\rm Int}(\theta(t_i))|
=
1+\sum_{i=1}^m |t_i|
=
|t|.
\]
Now suppose
$\theta(B^+(F))=\theta(B^+(G)).$
By \meqref{eq:theta-size}, the rooted trees $B^+(F)$ and $B^+(G)$ have the
same number of vertices.  We prove by induction on
\[
N:=|V(B^+(F))|=|V(B^+(G))|
\]
that this equality implies $B^+(F)=B^+(G)$.
For the initial step of $N=1$, both $B^+(F)$ and $B^+(G)$ consist only of the
root vertex.  Hence $F=G=\funit$, and therefore $B^+(F)=B^+(G)$.

For the induction step of $N\ge 2$, assume that the assertion holds for rooted
trees with fewer than $N$ vertices.  Write
\[
F=t_1\cdots t_m,
\quad
G=s_1\cdots s_n.
\]
Using \meqref{eq:def-theta-tree}, the equality
$\theta(B^+(F))=\theta(B^+(G))$ becomes
\[
\bigl(\theta(t_1)\backslash\cdots\backslash\theta(t_m)\bigr)\vee |
=
\bigl(\theta(s_1)\backslash\cdots\backslash\theta(s_n)\bigr)\vee |.
\]
Hence
\[
\theta(t_1)\backslash\cdots\backslash\theta(t_m)
=
\theta(s_1)\backslash\cdots\backslash\theta(s_n).
\]
By uniqueness of the $\backslash$-factorization into progressive trees,
\[
m=n,
\quad
\theta(t_i)=\theta(s_i),
\quad \forall \, i=1,\ldots,m.
\]
Applying the
induction hypothesis gives $t_i=s_i$ for all $i$.  Thus $F=G$, and hence
$B^+(F)=B^+(G)$.  Therefore $\theta$ is injective.

\noindent\textbf{Step 3.} (From $\theta$ to $\Theta$).  Since every forest factors uniquely into rooted trees and every planar binary
tree factors uniquely into progressive trees, the bijection $\theta$ induces a
bijection
$\Theta:\mathcal F\longrightarrow \Y$
given by
\[
\funit\longmapsto |,
\quad
t_1\cdots t_m
\longmapsto
\theta(t_1)\backslash\cdots\backslash\theta(t_m).
\]
Then
\[
\Theta(FG)=\Theta(F)\backslash\Theta(G),
\quad \forall \, F,G\in\mathcal F.
\]
Moreover, if $F=t_1\cdots t_m$, then
\[
\Theta(B^+(F))
=
\bigl(\theta(t_1)\backslash\cdots\backslash\theta(t_m)\bigr)\vee |
=
\Theta(F)\vee |
=
B^{r}(\Theta(F)).
\]
Thus $\Theta$ induces the required isomorphism \meqref{isomorp} of unitary
associative algebras. 

Its uniqueness follows from the prescribed value on $\funit$, the identity
$\Theta(B^+(F))=B^{r}(\Theta(F))$, and multiplicativity with respect to
concatenation and $\backslash$.
\end{proof}

We now combine the uniqueness characterization with the algebra isomorphism
$\Theta$ to identify the transported rooted-forest coproduct.

\begin{theorem}
Let $\Theta:\bk\mathcal F\to H_{\rm LR}^{\epsilon}$
be the unitary algebra isomorphism of Proposition~\mref{prop:theta-alg-iso}.
Then
\[
\Theta:
(\bk\mathcal F,\cdot,\funit,\Delta_{\rm RT}^{\epsilon})
\longrightarrow
(H_{\rm LR}^{\epsilon},\backslash,|,\Delta_{\rm LR}^{\epsilon})
\]
is an isomorphism of unitary infinitesimal bialgebras.
\mlabel{thm:bridge}
\end{theorem}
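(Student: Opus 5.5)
Since $\Theta$ is already a unitary algebra isomorphism by Proposition~\ref{prop:theta-alg-iso}, it remains to show that it intertwines the coproducts, i.e.
\[
(\Theta\otimes\Theta)\DeltaRT=\DeltaLR\circ\Theta .
\]
The plan is to transport $\DeltaRT$ to $H_{\rm LR}^{\epsilon}$ and invoke the uniqueness criterion of Proposition~\ref{prop:uniqbl}. Define
\[
\widetilde{\Delta}:=(\Theta\otimes\Theta)\circ\DeltaRT\circ\Theta^{-1}:H_{\rm LR}^{\epsilon}\to H_{\rm LR}^{\epsilon}\otimes H_{\rm LR}^{\epsilon}.
\]
Proving $\widetilde{\Delta}=\DeltaLR$ is equivalent to the claim. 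Coassociativity of $\DeltaLR$ is not even needed here; we only need $\widetilde\Delta$ to satisfy the three conditions of Proposition~\ref{prop:uniqbl}.

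We check the three conditions in order.
\begin{enumerate}
\item Since $\Theta(\funit)=|$ and $\DeltaRT(\funit)=0$, we get $\widetilde\Delta(|)=0$.
\item For the $B^r$-recursion \eqref{eq:uniqblone}, take a basis tree $T\in\Y$ and set $F=\Theta^{-1}(T)\in\mathcal F$. Proposition~\ref{prop:theta-alg-iso} gives $B^r(T)=\Theta(B^+(F))$. Applying \eqref{eq:rt-cocycle},
\[
\widetilde\Delta(B^r(T))=(\Theta\otimes\Theta)\bigl(F\otimes\funit+(\id\otimes B^+)\DeltaRT(F)\bigr).
\]
The identity $\Theta B^+=B^r\Theta$ (extended linearly) turns this into $T\otimes|+(\id\otimes B^r)\widetilde\Delta(T)$. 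Linearity then gives the identity on all of $H_{\rm LR}^{\epsilon}$.
\item For the derivation rule \eqref{eq:uniqbltwo}, which we prove for all $S,T$, write $S=\Theta(F)$ and $T=\Theta(G)$. Multiplicativity gives $S\backslash T=\Theta(FG)$, and \eqref{eq:rt-derivation} yields
\[
\widetilde\Delta(S\backslash T)=(\Theta\otimes\Theta)\bigl(F\cdot\DeltaRT(G)+\DeltaRT(F)\cdot G\bigr).
\]
It then suffices to note that $\Theta\otimes\Theta$ intertwines the concatenation bimodule structure on $\bk\mathcal F\otimes\bk\mathcal F$ with the $\backslash$-bimodule structure \eqref{de:bimo}. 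That is, $(\Theta\otimes\Theta)(F\cdot(G_1\otimes G_2))=\Theta(F)\backslash(\Theta(G_1)\otimes\Theta(G_2))$, and symmetrically on the right, which again follows from multiplicativity of $\Theta$.
\end{enumerate}

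Proposition~\ref{prop:uniqbl} then gives $\widetilde\Delta=\DeltaLR$, hence $(\Theta\otimes\Theta)\DeltaRT=\DeltaLR\Theta$. So $\Theta$ is a bijective morphism of unitary infinitesimal bialgebras, and its inverse is automatically a morphism as well.

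The only point needing real care is that Proposition~\ref{prop:uniqbl} is stated as determining $\DeltaLR$ among maps satisfying the three conditions. One must confirm that its proof uses nothing beyond those conditions: it only uses the free generation of $(H_{\rm LR}^{\epsilon},\backslash,|)$ by ${\rm P}(\Y)$ and the recursion on $B^r(U)$. So it applies verbatim to $\widetilde\Delta$.

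One subtlety remains. That proof expands $\widetilde\Delta(U_1\backslash\cdots\backslash U_m)$ via the derivation rule with arbitrary left factor $S$ but right factor restricted to generators. This is fine by induction on $m$, and in any case we have established the derivation rule for $\widetilde\Delta$ for all $S,T$. Should any gap appear, I would instead prove $(\Theta\otimes\Theta)\DeltaRT(F)=\DeltaLR(\Theta(F))$ directly by induction on the number of vertices of $F$. In that argument, forests are split by concatenation using \eqref{eq:rt-derivation} and Lemma~\ref{lem:derivation}, and trees are handled by $B^+$ using \eqref{eq:rt-cocycle} and \eqref{eq:Bell-cocycle-type}.
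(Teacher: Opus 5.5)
Your proposal is correct and follows the paper's proof essentially step by step. You transport $\Delta_{\rm RT}^{\epsilon}$ to $\widetilde{\Delta}=(\Theta\otimes\Theta)\circ\Delta_{\rm RT}^{\epsilon}\circ\Theta^{-1}$, check the three conditions of Proposition~\ref{prop:uniqbl} exactly as the paper does (via $\Theta(\funit)=|$, $\Theta B^{+}=B^{r}\Theta$ with \eqref{eq:rt-cocycle}, and multiplicativity of $\Theta$ with \eqref{eq:rt-derivation}), and conclude $\widetilde{\Delta}=\Delta_{\rm LR}^{\epsilon}$; your extra remarks on the restricted right factor, the inverse being a morphism, and the fallback induction are sound refinements rather than departures.
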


\begin{proof}
We  only need to prove
\begin{equation}
(\Theta\otimes \Theta)\Delta_{\rm RT}^{\epsilon}(F)
=
\Delta_{\rm LR}^{\epsilon}\bigl(\Theta(F)\bigr),
\quad \forall\, F\in \bk\mathcal F.
\mlabel{eq:bridge}
\end{equation}
Define
\[
\widetilde{\Delta}
:=
(\Theta\otimes \Theta)\circ \Delta_{\rm RT}^{\epsilon}\circ \Theta^{-1}
:
H_{\rm LR}^{\epsilon}\longrightarrow
H_{\rm LR}^{\epsilon}\otimes H_{\rm LR}^{\epsilon}.
\]
We verify the three conditions in Proposition~\mref{prop:uniqbl}.
First, since $\Theta(\funit)=|$ and
$\Delta_{\rm RT}^{\epsilon}(\funit)=0$, we have
$\widetilde{\Delta}(|)=0.$
Next, let $T\in\Y$ and write $T=\Theta(F)$ with $F\in\mathcal F$.  Since
\[
B^{r}(T)
=
B^{r}(\Theta(F))
=
\Theta(B^+(F)),
\]
we get
\begin{align*}
\widetilde{\Delta}\bigl(B^{r}(T)\bigr)
&=
(\Theta\otimes \Theta)
\Delta_{\rm RT}^{\epsilon}\bigl(B^+(F)\bigr)
\\
&\overset{\eqref{eq:rt-cocycle}}=
(\Theta\otimes \Theta)
\Bigl(
F\otimes \funit
+
(\id\otimes B^+)\Delta_{\rm RT}^{\epsilon}(F)
\Bigr)
\\
&=
\Theta(F)\otimes |
+
(\id\otimes B^{r})
(\Theta\otimes \Theta)\Delta_{\rm RT}^{\epsilon}(F)
\\
&=
T\otimes |
+
(\id\otimes B^{r})\widetilde{\Delta}(T).
\end{align*}
Finally, we check the derivation rule.  Let $S,T\in\Y$ and write
\[
S=\Theta(F),
\quad
T=\Theta(G)
\]
for some $F,G\in\mathcal F$.
Then $S\backslash T=\Theta(FG)$.  For
$X=\sum_a X'_a\otimes X''_a\in
\bk\mathcal F\otimes\bk\mathcal F,$
the algebra isomorphism $\Theta$ gives
\[
(\Theta\otimes\Theta)(F\cdot X)
=
\Theta(F)\backslash(\Theta\otimes\Theta)(X),
\quad
(\Theta\otimes\Theta)(X\cdot G)
=
(\Theta\otimes\Theta)(X)\backslash\Theta(G).
\]
Hence
\begin{align*}
\widetilde{\Delta}(S\backslash T)
&=
(\Theta\otimes \Theta)\Delta_{\rm RT}^{\epsilon}(FG)
\\
&\overset{\eqref{eq:rt-derivation}}=
(\Theta\otimes \Theta)
\Bigl(
F\cdot \Delta_{\rm RT}^{\epsilon}(G)
+
\Delta_{\rm RT}^{\epsilon}(F)\cdot G
\Bigr)
\\
&=
S\backslash \widetilde{\Delta}(T)
+
\widetilde{\Delta}(S)\backslash T.
\end{align*}
Thus $\widetilde{\Delta}$ satisfies the three conditions of
Proposition~\mref{prop:uniqbl}.  Therefore
$\widetilde{\Delta}=\Delta_{\rm LR}^{\epsilon},$
and
\[
(\Theta\otimes \Theta)\Delta_{\rm RT}^{\epsilon}(F)
=
\Delta_{\rm LR}^{\epsilon}\bigl(\Theta(F)\bigr),
\quad \forall \, F\in\bk\mathcal F.
\]
This completes the proof.
\end{proof}

Let us expose an example for better understanding of the above isomorphism. 

\begin{exam}
\mlabel{ex:bridge-typical}
Let
$\bullet:=B^+(\funit), t:=B^+\bigl(\bullet\,B^+(\bullet)\bigr).
$
Then $t$ is the planar rooted tree
\[
\begin{tikzpicture}[baseline=-2pt, scale=0.9]
\coordinate (w) at (0,0);
\coordinate (b) at (-0.75,0.75);
\coordinate (a) at (0.75,0.75);
\coordinate (x) at (0.75,1.55);
\draw (w) -- (b);
\draw (w) -- (a);
\draw (a) -- (x);
\node[inner sep=0pt] at (w) {$\bullet$};
\node[inner sep=0pt] at (b) {$\bullet$};
\node[inner sep=0pt] at (a) {$\bullet$};
\node[inner sep=0pt] at (x) {$\bullet$};
\end{tikzpicture}.
\]
Under the map $\Theta$, we have
$$\Theta(\bullet)=\YY{}$$ and
$$\Theta\bigl(B^+(\bullet)\bigr)
=B^{r}\bigl(\Theta(\bullet)\bigr)=\YY{\yy10}.$$
Moreover,
\begin{align*}
\Theta\bigl(\bullet\,B^+(\bullet)\bigr)
&=
\Theta(\bullet)\backslash \Theta\bigl(B^+(\bullet)\bigr)
=
\YY{}\backslash \YY{\yy10}
=
\YY{\yy11\yy22},
\\
\Theta(t)
&=
B^{r}\bigl(\Theta(\bullet\,B^+(\bullet))\bigr)
=
\bigl(\YY{}\backslash \YY{\yy10}\bigr)\vee |
=
\YY{\yy10\yy21\yy32}.
\end{align*}
On the rooted-forest side,
\[
\Delta_{\rm RT}^{\epsilon}(\bullet)
=
\funit\otimes \funit,
\quad
\Delta_{\rm RT}^{\epsilon}\bigl(B^+(\bullet)\bigr)
=
\bullet\otimes \funit+\funit\otimes \bullet.
\]
Hence
\begin{align*}
\Delta_{\rm RT}^{\epsilon}\bigl(\bullet\,B^+(\bullet)\bigr)
&\overset{\eqref{eq:rt-derivation}}=
\bullet\cdot
\Delta_{\rm RT}^{\epsilon}\bigl(B^+(\bullet)\bigr)
+
\Delta_{\rm RT}^{\epsilon}(\bullet)\cdot B^+(\bullet)
\\
&=
\bullet\bullet\otimes \funit
+
\bullet\otimes \bullet
+
\funit\otimes B^+(\bullet),
\end{align*}
and
\begin{align*}
\Delta_{\rm RT}^{\epsilon}(B^+\bigl(\bullet\,B^+(\bullet)\bigr))
&\overset{\eqref{eq:rt-cocycle}}=
\bullet\,B^+(\bullet)\otimes \funit
+
(\id\otimes B^+)
\Delta_{\rm RT}^{\epsilon}\bigl(\bullet\,B^+(\bullet)\bigr)
\\
&=
\bullet\,B^+(\bullet)\otimes \funit
+
\bullet\bullet\otimes \bullet
+
\bullet\otimes B^+(\bullet)
+
\funit\otimes B^+\bigl(B^+(\bullet)\bigr).
\end{align*}
Applying \meqref{eq:bridge}, we obtain
\begin{align*}
\Delta_{\rm LR}^{\epsilon}\Bigl(\Theta\Bigl(
\vcenter{\hbox{
\begin{tikzpicture}[baseline=(current bounding box.center), scale=0.9]
\coordinate (w) at (0,0);
\coordinate (b) at (-0.75,0.75);
\coordinate (a) at (0.75,0.75);
\coordinate (x) at (0.75,1.55);
\draw (w) -- (b);
\draw (w) -- (a);
\draw (a) -- (x);
\node[inner sep=0pt] at (w) {$\bullet$};
\node[inner sep=0pt] at (b) {$\bullet$};
\node[inner sep=0pt] at (a) {$\bullet$};
\node[inner sep=0pt] at (x) {$\bullet$};
\end{tikzpicture}
}}
\Bigr)\Bigr)
&=
(\Theta\otimes\Theta)\Delta_{\rm RT}^{\epsilon}\Bigl(
\vcenter{\hbox{
\begin{tikzpicture}[baseline=(current bounding box.center), scale=0.9]
\coordinate (w) at (0,0);
\coordinate (b) at (-0.75,0.75);
\coordinate (a) at (0.75,0.75);
\coordinate (x) at (0.75,1.55);
\draw (w) -- (b);
\draw (w) -- (a);
\draw (a) -- (x);
\node[inner sep=0pt] at (w) {$\bullet$};
\node[inner sep=0pt] at (b) {$\bullet$};
\node[inner sep=0pt] at (a) {$\bullet$};
\node[inner sep=0pt] at (x) {$\bullet$};
\end{tikzpicture}
}}
\Bigr)
\\
&=
\bigl(\YY{}\backslash \YY{\yy10}\bigr)\otimes |
+
\bigl(\YY{}\backslash \YY{}\bigr)\otimes \YY{}
+
\YY{}\otimes \YY{\yy10}
+
|\otimes \bigl(\YY{\yy10}\vee |\bigr)
\\
&=
\YY{\yy11\yy22}\otimes |
+
\YY{\yy11}\otimes \YY{}
+
\YY{}\otimes \YY{\yy10}
+
|\otimes \YY{\yy10\yy20}.
\end{align*}
\end{exam}

We conclude this section  with a final remark.

\begin{remark}
\begin{enumerate}
\item The isomorphism $\Theta$ in Theorem~\mref{thm:bridge} has the same underlying set map
as the Aguiar--Sottile rooted-forest/binary-tree bijection~\mcite{Aguiar2006},
which is the map
$
\varphi:\mathcal F\longrightarrow \Y
$
defined by
\[
\varphi(\funit)=|,
\quad
\varphi(FG)=\varphi(F)\backslash\varphi(G),
\quad
\varphi(B^+(F))=\varphi(F)\vee |.
\]
Indeed, Proposition~\mref{prop:theta-alg-iso} shows that $\Theta$ satisfies the same
rules on $F\in\mathcal F$.  The algebraic structures studied here are
different.

\item Since Aguiar--Sottile's framework also connects planar binary trees with
permutations, it is natural to ask whether one can study an infinitesimal
bialgebra structure on permutations compatible with these maps.
\end{enumerate}
\end{remark}

\smallskip
\noindent {\bf Acknowledgments.} This work is supported by the National Natural Science Foundation of China (12571019), the Natural Science Foundation of Gansu Province (25JRRA644) and Innovative Fundamental Research Group Project of Gansu Province (23JRRA684).

\end{document}